\theoremstyle{plain}
\newtheorem{theorem}{Theorem}[section]
\theoremstyle{remark}
\newtheorem{remark}[theorem]{Remark}
\theoremstyle{plain}
\newtheorem{corollary}[theorem]{Corollary}
\newtheorem{proposition}[theorem]{Proposition}
\numberwithin{equation}{section}
\def\N{{\mathbb N}}
\def\R{{\mathbb R}}
\newcommand{\ds}{\displaystyle}
\newcommand\numberthis{\addtocounter{equation}{1}\tag{\theequation}}
\newcommand{\supp}{\text{\rm supp\,}}
\def\typeout#1{\message{^^J}\message{#1}\message{^^J}}
\newif\ifSRCOK \SRCOKtrue
\def\EJECT{\SRC\eject}
\def\WinEdt#1{\typeout{:#1}}
\gdef\MainFile{\jobname.tex}
\gdef\CurrentInput{\MainFile}
\def\SRC{\ifSRCOK%
  \ifnum\inputlineno>\LASTLINE%
    \ifnum\LASTLINE<0%
      \global\PAGETOP=\inputlineno%
    \fi%
    \global\LASTLINE=\inputlineno%
    \ifnum\INPSP=0%
      \ifnum\inputlineno>\PAGETOP%
        
      \fi%
    \else%
      
    \fi%
  \fi%
\fi}
\def\PUSH#1{%
\SRC%
\ifnum\INPSP=0 \global\let\INPSTACKA=\CurrentInput \else%
\ifnum\INPSP=1 \global\let\INPSTACKB=\CurrentInput \else%
\ifnum\INPSP=2 \global\let\INPSTACKC=\CurrentInput \else%
\ifnum\INPSP=3 \global\let\INPSTACKD=\CurrentInput \else%
\ifnum\INPSP=4 \global\let\INPSTACKE=\CurrentInput \else%
\ifnum\INPSP=5 \global\let\INPSTACKF=\CurrentInput \else%
               \global\let\INPSTACKX=\CurrentInput \fi\fi\fi\fi\fi\fi%
\gdef\CurrentInput{#1}%
\WinEdt{<+ \CurrentInput}%
\global\LASTLINE=0%
\ifSRCOK\fi%
\global\advance\INPSP by 1}
\def\POP{%
\ifnum\INPSP>0 \global\advance\INPSP by -1  \fi%
\ifnum\INPSP=0 \global\let\CurrentInput=\INPSTACKA \else%
\ifnum\INPSP=1 \global\let\CurrentInput=\INPSTACKB \else%
\ifnum\INPSP=2 \global\let\CurrentInput=\INPSTACKC \else%
\ifnum\INPSP=3 \global\let\CurrentInput=\INPSTACKD \else%
\ifnum\INPSP=4 \global\let\CurrentInput=\INPSTACKE \else%
\ifnum\INPSP=5 \global\let\CurrentInput=\INPSTACKF \else%
               \global\let\CurrentInput=\INPSTACKX \fi\fi\fi\fi\fi\fi%
\WinEdt{<-}%
\global\LASTLINE=\inputlineno%
\global\advance\LASTLINE by -1%
\SRC}
\def\INPUT#1{\relax}
\def
\let\originalxxxeverypar\everypar
\newtoks\everypar
\everymath\expandafter{\the\everymath\expandafter\SRC}
\output\expandafter{\expandafter\SRCOKfalse\the\output}
\newif\ifSRCOK \SRCOKtrue
\gdef\MainFile{\jobname.tex}
\gdef\CurrentInput{\MainFile}
\def\EJECT{\SRC\eject}
\def\WinEdt#1{\typeout{:#1}}
\def\SRC{\ifSRCOK%
  \ifnum\inputlineno>\LASTLINE%
    \ifnum\LASTLINE<0%
      \global\PAGETOP=\inputlineno%
    \fi%
    \global\LASTLINE=\inputlineno%
    \ifnum\INPSP=0%
      \ifnum\inputlineno>\PAGETOP%
      \fi%
    \else%
    \fi%
  \fi%
\fi}
\def\PUSH#1{%
\SRC%
\ifnum\INPSP=0 \global\let\INPSTACKA=\CurrentInput \else%
\ifnum\INPSP=1 \global\let\INPSTACKB=\CurrentInput \else%
\ifnum\INPSP=2 \global\let\INPSTACKC=\CurrentInput \else%
\ifnum\INPSP=3 \global\let\INPSTACKD=\CurrentInput \else%
\ifnum\INPSP=4 \global\let\INPSTACKE=\CurrentInput \else%
\ifnum\INPSP=5 \global\let\INPSTACKF=\CurrentInput \else%
               \global\let\INPSTACKX=\CurrentInput \fi\fi\fi\fi\fi\fi%
\gdef\CurrentInput{#1}%
\WinEdt{<+ \CurrentInput}%
\global\LASTLINE=0%
\ifSRCOK\fi%
\global\advance\INPSP by 1}
\def\POP{%
\ifnum\INPSP>0 \global\advance\INPSP by -1  \fi%
\ifnum\INPSP=0 \global\let\CurrentInput=\INPSTACKA \else%
\ifnum\INPSP=1 \global\let\CurrentInput=\INPSTACKB \else%
\ifnum\INPSP=2 \global\let\CurrentInput=\INPSTACKC \else%
\ifnum\INPSP=3 \global\let\CurrentInput=\INPSTACKD \else%
\ifnum\INPSP=4 \global\let\CurrentInput=\INPSTACKE \else%
\ifnum\INPSP=5 \global\let\CurrentInput=\INPSTACKF \else%
               \global\let\CurrentInput=\INPSTACKX \fi\fi\fi\fi\fi\fi%
\WinEdt{<-}%
\global\LASTLINE=\inputlineno%
\global\advance\LASTLINE by -1%
\SRC}
\def\INPUT#1{\relax}
\let\OldINCLUDE=\include
\def\include#1{
\EJECT%
\PUSH{#1.tex}%
\OldINCLUDE{#1}%
\POP}
\def
\let\originalxxxeverypar\everypar
\newtoks\everypar
\everymath\expandafter{\the\everymath\expandafter\SRC}
\let\zzzxxxbibliography=\bibliography
\def\bibliography#1{\PUSH{\jobname.bbl}\zzzxxxbibliography{#1}\POP}
\output\expandafter{\expandafter\SRCOKfalse\the\output}
\begin{document}
\author{E. M. Ait Benhassi}
\address{E. M. Ait Benhassi, CRMEF, Marrakesh}
\email{m.benhassi@uca.ma}	

\author{Mohamed Fadili}
\address{M. Fadili, Cadi Ayyad University, Faculty of Sciences Semlalia, 2390, Marrakesh, Morocco}
\email{m.fadili@ced.uca.ma}

\author{Lahcen Maniar}
\address{L. maniar, Cadi Ayyad University, Faculty of Sciences Semlalia, 2390, Marrakesh, Morocco}
\email{maniar@uca.ma}

\title[Algebraic condition]{On Algebraic condition for null controllability of some coupled degenerate systems}

\keywords{Parabolic systems, Carleman estimate, null controllability,
	observability estimate, Kalman condition}
\subjclass[2000]{35K20, 35K65, 47D06, 93B05, 93B07}


\begin{abstract}
	In this paper we will generalize the Kalman rank condition for the null controllability   to   $n$-coupled linear degenerate parabolic systems with constant coefficients,  diagonalizable diffusion matrix, and $m$-controls. For that  we prove a global Carleman estimate of the solution of a scalar $2n$-order equation then we infer from it an observability inequality for the  corresponding adjoint system, and thus the null controllability.
\end{abstract}

\maketitle

	
\section{Introduction and Main result }
In this work, we focus the following problem
 \begin{equation}\label{syst1}
 \left\lbrace \begin{array}{lll}
 \partial_t Y=(\mathbf{D}\mathcal{M}   +A) Y + B v \mathbbm{1}_{\omega}& in & Q, \\
 \mathbf{C} Y=0   & on & \Sigma, \\
 Y(0)=Y_0\,\,  & in & (0,1),
 \end{array}
 \right.
 \end{equation}
 where $Q:=(0,T)\times(0,1)$, $\Sigma:=(0,T)\times\{0,1\}$,  $\omega\subset (0,1)$ is a nonempty open  control region,  $\mathbbm{1}_{\omega}$ denotes the characteristic function of $\omega$, $T>0$, $\mathbf{D}$ is a $n\times n$ matrix, $B$ is a $n\times m$ matrix,   $v~=~(v_1, \cdots, v_m)^*$ is the control and  $Y=(y_1,\cdots,y_n)^*$ is the state. In the sequel we denote also $Q_{\omega}:=(0,T)\times\omega$. The operator $\mathcal{M}$ is defined by 
 $  \mathcal{M} y= {\left(  a y_x \right)}_x$ for $y\in D(\mathcal{M} )\subset L^2(0,1)$. For  $Y=(y_1,\cdots,y_n)^*$,  $\mathcal{M}Y$ denotes $(\mathcal{M}y_1,\cdots,\mathcal{M}y_n)^*$. 
The function  $a$ is a diffusion coefficient which degenerates at $0$ (i.e., $a(0)=0$)  and which can be either  weak  degenerate (WD), i.e.,
  \begin{equation}
  \text{(WD)}\,\,	
  \begin{cases}
  (i)\,\,a \in \mathcal{C}([0,1])\cup\mathcal{C}^1((0,1]),\, a>0 \text{ in }(0,1],\,a(0)=0,\\
  (ii)\,\,\exists K\in[0,1)\text{ such that }xa'(x)\leqslant Ka(x), \;\,\forall x\in[0,1],
  \end{cases}
  \end{equation}
 or  strong degenerate (SD), i.e.,
  \begin{equation}
  \text{(SD)}\,\,
  \begin{cases}
  (i)\,\,a \in \mathcal{C}^1([0,1]),\, a>0 \text{ in }(0,1],\,a(0)=0,\\
  (ii)\,\,\exists K\in[1,2)\text{ such that }xa'(x)\leqslant Ka(x)\,\forall x\in[0,1], \\
  (iii)\begin{cases}
  \ds\exists\theta\in(1,K] x\mapsto\frac{a(x)}{x^{\theta}}\text{ is nondecreasing near }0, \text{ if }K>1,\\
  \ds\exists\theta\in(0,1) x\mapsto\frac{a(x)}{x^{\theta}}\text{ is nondecreasing near }0, \text{ if }K=1.
  \end{cases}
  \end{cases}
  \end{equation}
   The boundary  condition  $\mathbf{C}Y=0$ is either  $Y(0)=Y(1)=0$ in the weak degenerate  case $(WD)$ or
 $Y(1)=(aY_x)(0)=0$ in the strongly degenerate case $(SD)$.
It is well known that null controllability of non degenerate ($a>0$) parabolic systems have been widely studied over the last 40 years and there have been a great number of results.  In the case of one equation  ($n=1$),   the result was obtained by  A. V. Fursikov and O. Y. Imanuvilov \cite{Fursikov} and  G. Lebeau and L. Robbiano \cite{Leb_Rob}.  In the case of coupled systems  $n\geq 2$, M. Gonzalez-Burgos, L. de Teresa \cite{ref3} provided a null controllability result for a cascade   parabolic  system. Recently,  F. Ammar-Khodja et al. \cite{A2,A3}  obtained several results characterizing the null controllability  of fully coupled systems with $m$-control forces by a generalized Kalman rank condition.  

For degenerate systems (e.g., $a(0)=0$),  null controllability of one equation was studied in \cite{BOU,CaMaVa} and the references therein. The case  of two coupled equations ($n=2$),  cascade  systems  are considered  in \cite{de-Ca}, and  in \cite{bahm, hjjaj}   the authors have studied the null controllability of degenerate  non-cascade parabolic systems.  In the case $n>2$,  in a recent work \cite{FadiliManiar}, we have extended the null controllability results obtained by Ammar-Khodja et al. \cite{A3} to a class of parabolic degenerate systems \eqref{syst1} in the two following cases :

\begin{enumerate}
	\item the coupling matrix $A$ is a cascade one and the diffusion matrix $\mathbf{D}=diag(d_1,\cdots,d_n)$ where $d_i>0$, $i=1,\cdots,n$,
	\item the coupling matrix $A$ is a full matrix (non cascade)  and the diffusion matrix $\mathbf{D}=d I_n, \; d>0$.
\end{enumerate}

In the present paper,  we study the case where  the coupling matrix $A$ is a full matrix and the diffusion matrix $\mathbf{D}$ is a diagonalizable $n\times n$ matrix with positive real eigenvalues, i.e.,
\begin{equation}\label{D:diagonalizable}
\mathbf{D}=P^{-1} \mathbf{J} P,\,\, P\in \mathcal{L}(\R^n),\,\, det(P)\neq 0,
\end{equation}
where  $\mathbf{J}=diag(d_1,\cdots,d_n)$, $d_i>0, 1\leqslant i\leqslant n$. The strategy used in this case is quite different from the one used in \cite{FadiliManiar}, and follows the one used in \cite{A4}.  To establish  an observability inequality to the adjoint system of  \eqref{syst1},  we prove a global Carleman estimate for a degenerate scalar equation \eqref{scalar:parabolic} of $2n$ order in space. This will lead to  several Carleman estimates, and thus to an observability inequality,  for our adjoint system.  Another difference of  \cite{FadiliManiar} is that  in  the Carleman estimates used for one degenerate equation, here  we need to  establish ones involving   the terms  $y_t$ and $(a(x)y_x)_x$ in addition to the state $y$ and its space derivative $y_x$.

 Let us introduce the following weighted spaces.
In  the (WD) case :
 $$\displaystyle H_{a}^1=\left\lbrace u \in L^2(0,1)/  u \text{   absolutely continuous in} [0,1], \sqrt{a}u_x \in L^2(0,1) \text{ and } u(1)=u(0)=0 \right\rbrace$$
 and
 $$\displaystyle H_{a}^2=\left\lbrace u \in H_{a}^1(0,1)/ au_x \in H^1(0,1)  \right\rbrace.$$
 In the (SD) case :
 $$\displaystyle H_{a}^1=\left\lbrace u \in L^2(0,1)/  u \text{   absolutely continuous in} (0,1], \sqrt{a}u_x \in L^2(0,1) \text{ and } u(1)=0 \right\rbrace$$
 and
 \begin{align*}
 \displaystyle H_{a}^2 &=\left\lbrace u \in H_{a}^1(0,1)/ au_x \in H^1(0,1)  \right\rbrace\\
 &=\left\lbrace u \in L^2(0,1)/  u \text{   absolutely continuous in} (0,1], au \in H_0^1(0,1),au_x \in H^1(0,1)\text{ and } (au_x)(0)=0  \right\rbrace.
 \end{align*}
  In both cases, the norms are defined as follow
 \begin{equation}\label{norms}
 	{\parallel u\parallel}_{H_{a}^1}^2 ={\parallel u\parallel}_{L^2(0,1)}^2+{\parallel \sqrt{a} u_x\parallel}_{L^2(0,1)}^2, \quad{\parallel u\parallel}_{H_{a}^2}^2 ={\parallel u\parallel}_{H_{a}^1}^2+{\parallel {(a u_x)}_x\parallel}_{L^2(0,1)}^2.
  \end{equation}
 Using the assumptions on the operator $\mathcal{M}$ and the condition \eqref{D:diagonalizable} on the diffusion matrix $\mathbf{D}$, for every $Y_0\in L^2(0,1)^n$ and $v\in L^2((0,T)\times(0,1))^m$, system \eqref{syst1}
  possesses a unique solution $\ds Y \in L^2(0,T;H_a^1(0,1)^n)\cap\mathcal{C}^0([0,T];L^2(0,1)^n)$. \\
Let us denote  $L:=\mathbf{D}\mathcal{M}+A$, with $D(L)=D(\mathcal{M})^n={H^2_a(0,1)}^n.$
 Then the  Kalman operator associated with $(L,B)$ is the matrix operator
 \begin{equation*}
 \begin{cases}
 \mathcal{K}:= [L|B] ~:~D(\mathcal{K})\subset L^2(0,1)^{nm}\longrightarrow L^2(0,1)^n, \\
 D(\mathcal{K}):=\Big\{ u \in L^2(0,1)^{nm}~:~\mathcal{K}u\in L^2(0,1)^n \Big\},
 \end{cases}
 \end{equation*}
 where
 \begin{equation*}
 [L|B]:= \big[L^{n-1}B|L^{n-2}B|\cdots|LB|B\big].
 \end{equation*}
 The adjoint system associated to the system \eqref{syst1} is the following
 \begin{equation}\label{syst2adjoint}
 \left\lbrace \begin{array}{lll}
 -\partial_t \varphi=\mathbf{D}^* \mathcal{M}\varphi  +A^*\varphi  & in & Q,\\
 \mathbf{C}\varphi=0  & on & \Sigma, \\
 \varphi(T)=\varphi_T\,\,  & in & (0,1).
 \end{array}
 \right.
 \end{equation}

 To study the null controllability  of the system \eqref{syst1}, we need to establish an observability inequality of the corresponding adjoint problem \eqref{syst2adjoint}. Indeed, we must prove the existence of a positive constant $C$ such that, for every $\varphi_0\in L^2(0,1)^n$,  the solution $\varphi\in\mathcal{C}^{0}([0,T],L^2(0,1)^n)$ of system \eqref{syst2adjoint} satisfies
 \begin{equation}\label{obser:inequality}
 \|\varphi(0,\cdot)\|_{L^2(0,1)^n}^2\leqslant C \int\!\!\!\!\!\int_{(0,T)\times\omega} |B^* \varphi(t,x)|^2 dx dt.
 \end{equation}
 The inequality \eqref{obser:inequality} will be deduced from a global Carleman estimate satisfied by the solution of the adjoint system \eqref{syst2adjoint} (Corollary \ref{coro4.3}\,). To prove this, we first show a Carleman estimate (Theorem \ref{thm4.2}\,) which bounds a weighted global integral of $\mathcal{K}^*\varphi$ by means of a weighted local integral of $B^*\varphi$. This last Carleman estimate is obtained by showing several intermediate Carleman estimates, and by assuming  the generalized Kalman  condition $Ker (\mathcal{K}^*)=\{0\}$, we will be able to obtain the desired Carleman estimate for system \eqref{syst2adjoint}. Thus, we conclude with the observability inequality  \eqref{obser:inequality} and the  null-controllability  of system \eqref{syst1}. At the end, we show that the generalized Kalman  condition $Ker (\mathcal{K}^*)=\{0\}$ is also necessary. Thus our  main result is the following.
 \begin{theorem}\label{maintheorem}
 	Let us assume that $\mathbf{D}$ satisfies \eqref{D:diagonalizable}. Then, system \eqref{syst1} is null controllable at any time $T>0$ if and only if the
 	Kalman operator $\mathcal{K}$ satisfies
 	\begin{equation}\label{K:null}
 	Ker (\mathcal{K}^*) = \{0\}.
 	\end{equation}	
 \end{theorem}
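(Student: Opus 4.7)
The plan is to split the equivalence into its two directions and, for the sufficiency, to follow the adjoint--Carleman--HUM scheme already outlined by the authors. As a preliminary reduction, the hypothesis $\mathbf{D}=P^{-1}\mathbf{J}P$ allows the change of variables $\widetilde Y = PY$, which turns \eqref{syst1} into a system with diagonal diffusion $\mathbf{J}=\mathrm{diag}(d_1,\ldots,d_n)$, perturbed coupling matrix $PAP^{-1}$ and control matrix $PB$, without altering either the null-controllability property or the Kalman condition $Ker(\mathcal{K}^*)=\{0\}$. One may therefore assume $\mathbf{D}$ is already diagonal.

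For the sufficiency, the central analytic object is the degenerate scalar equation \eqref{scalar:parabolic} of order $2n$ in space, which is satisfied by suitable combinations of the adjoint components obtained by iterating $L^*$. I would first prove a global Carleman estimate for this $2n$-order equation using weights of the type developed in \cite{CaMaVa} and \cite{FadiliManiar} for degenerate operators, but with the essential enhancement announced in the introduction: the left-hand side must absorb not only $y$ and $\sqrt{a}\,y_x$ but also $y_t$ and $(a y_x)_x$. This is the main obstacle; it requires a careful integration-by-parts analysis at the degeneracy $x=0$, keeping track of boundary contributions under either the weak-degenerate condition $y(0)=0$ or the strong-degenerate condition $(ay_x)(0)=0$, and choosing weights $\theta(t)$ and $\psi(x)$ compatible with all orders of differentiation simultaneously.

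Next I would feed this scalar estimate into the components of $\mathcal{K}^*\varphi$, deducing by induction on the power of $L^*$ a Carleman inequality (Theorem~\ref{thm4.2}) that bounds a weighted global integral of $\mathcal{K}^*\varphi$ by a weighted local integral of $B^*\varphi$ on $(0,T)\times\omega$. Here the Kalman hypothesis $Ker(\mathcal{K}^*)=\{0\}$ enters in an essential way: it allows one to transfer the bound from $\mathcal{K}^*\varphi$ to $\varphi$ itself, because the operator $\mathcal{K}^*$ becomes injective and admits the required left inverse on the relevant function spaces. Combining the resulting Carleman estimate with an energy identity for the backward adjoint system \eqref{syst2adjoint} (absorbing the bounded perturbation $A^*$) and evaluating at $t=T/2$ yields the observability inequality \eqref{obser:inequality}. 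Null controllability of \eqref{syst1} then follows by the standard HUM duality.

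For the necessity, I would reason by contradiction. If there exists $\varphi_T\in Ker(\mathcal{K}^*)\setminus\{0\}$, then $B^*(L^*)^k\varphi_T=0$ for $k=0,\ldots,n-1$; along the adjoint flow, time derivatives of $B^*\varphi$ at $t=T$ can be expressed in terms of the quantities $B^*(L^*)^k\varphi_T$, so $B^*\varphi$ and all its time derivatives vanish at $t=T$ on $\omega$. A unique-continuation/analyticity argument for the adjoint parabolic system then forces $B^*\varphi\equiv 0$ on $(0,T)\times\omega$, while backward uniqueness gives $\varphi(0,\cdot)\neq 0$. This contradicts \eqref{obser:inequality}, so $Ker(\mathcal{K}^*)=\{0\}$ is necessary.
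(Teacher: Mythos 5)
Your sufficiency argument is essentially the paper's own scheme: the Carleman estimate for the $2n$-order scalar equation \eqref{scalar:parabolic} (Theorem \ref{theorem:3.2}), applied to the components of $B^*\varphi$ and their time derivatives, gives Theorem \ref{thm4.2} (note that the entries of $\mathcal{K}^*\varphi$ are exactly $(-1)^j\partial_t^jB^*\varphi$ along the adjoint flow, so no separate induction on powers of $L^*$ is needed); the Kalman hypothesis is then used through the quantitative estimate of Theorem \ref{theorem2.5}(3), $\|\mathcal{M}^{k-(2n-1)(n-1)}\varphi\|\leqslant C\|\mathcal{M}^k\mathcal{K}^*\varphi\|$, to pass from $\mathcal{K}^*\varphi$ to $\varphi$ (Corollary \ref{coro4.3}); finally a dissipation inequality in time yields \eqref{obser:inequality} and duality concludes. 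One caution: mere injectivity of $\mathcal{K}^*$ does not give a bounded left inverse (the quantities $\det(\mathcal{K}_p\mathcal{K}_p^*)$ may tend to $0$), so the ``left inverse on the relevant function spaces'' must be understood precisely as the loss-of-regularity estimate above; with that reading, this half is fine and coincides with the paper.

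The necessity direction, however, has a genuine gap. From $\mathcal{K}^*\varphi_T=0$ you only get $B^*(L^*)^j\varphi_T=0$ for $j\leqslant n-1$; the vanishing of \emph{all} time derivatives of $B^*\varphi$ at $t=T$ is not immediate (it requires a mode-by-mode Cayley--Hamilton argument with $p$-dependent coefficients, and in any case $\varphi_T\in Ker(\mathcal{K}^*)$ is only an $L^2$ function, so these traces at $t=T$ need not exist). More seriously, the map $t\mapsto\varphi(t,\cdot)=e^{(T-t)(\mathbf{D}^*\mathcal{M}+A^*)}\varphi_T$ is analytic only for $t<T$, i.e.\ away from the endpoint where you impose the vanishing; vanishing of all derivatives at an endpoint of the interval of analyticity does not force a function to vanish on the interval (think of $e^{-1/(T-t)}$), so the ``unique-continuation/analyticity'' step as stated does not go through. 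The paper's necessity proof avoids all of this by working mode by mode: by Proposition \ref{prop22}, $Ker(\mathcal{K}^*)\neq\{0\}$ forces $\mathrm{rank}\,\mathcal{K}_{p_0}=\mathrm{rank}\,[-\lambda_{p_0}\mathbf{D}+A\,|\,B]<n$ for some $p_0$, the classical finite-dimensional Kalman theorem then produces a nonzero solution $z_{p_0}(t)$ of the adjoint ODE with $B^*z_{p_0}\equiv0$ on $[0,T]$, and $\varphi(t,x)=z_{p_0}(t)\varPhi_{p_0}(x)$ is a nonzero solution of \eqref{syst2adjoint} with $B^*\varphi\equiv0$ in all of $Q$ (and $\varphi(0,\cdot)\neq0$ by ODE uniqueness), which directly violates \eqref{obser:inequality} and hence null controllability; no parabolic unique continuation or backward uniqueness is needed. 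Your argument can be repaired by performing this spectral projection, but as written the key step would fail.
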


 The rest of the work is organized as follows: In section 2, we state some properties of the unbounded operator $\mathcal{K}$ and  give a useful characterization  of the Kalman condition $\ds Ker (\mathcal{K}^*) = \{0\}$ by using the spectrum of operator $\mathcal{M}$. Section 3 is devoted to show several intermediate Carleman estimates for  scalar parabolic  degenerate equations of order $2$ and  $2n$ in space. In Section 4, the proof of Theorem \ref{maintheorem}  is given in the end of Section 4. 

All along the article, we use generic constants for the estimates, whose values
 may change from line to line.

\section{ Spectrum of operator $\mathcal{M}$ and some algebraic tools}
This section will be devoted to prove two crucial properties of the Kalman operator $\mathcal{K}$ and to give an equivalent algebraic condition to the condition \eqref{K:null}. Let us focus on the spectrum of the unbounded operator $\mathcal{M}$ defined by $\forall u\in D(\mathcal{M})~:~ \mathcal{M}u=(a(x)u_x)_x$ where $D(\mathcal{M})=H_{a}^2(0,1)\subset L^2(0,1)$.\\
We recall the Hardy-poincar\'e inequality \cite[Proposition 2.1]{BOU}
\begin{proposition}
	For all $u$ in $H_{a}^2(0,1)$
	\begin{equation}\label{HarPoin}
		\int_{0}^{1}\frac{a(x)}{x^2} u^2(x)dx \leqslant C \int_{0}^{1} a(x) |u_x(x)|^2dx.
	\end{equation}
\end{proposition}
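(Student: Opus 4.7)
The plan is to establish \eqref{HarPoin} by a Hardy-type integration by parts that exploits the structural inequality $xa'(x)\leqslant Ka(x)$ to trade the singular weight $a(x)/x^2$ for the dissipation $a(x)|u_x|^2$. The starting point is the elementary identity
\begin{equation*}
\frac{d}{dx}\!\left(\frac{a(x)u^2(x)}{x}\right) = \frac{(xa'(x)-a(x))u^2(x)}{x^2} + \frac{2a(x)u(x)u_x(x)}{x},
\end{equation*}
integrated over $(0,1)$. Since $u(1)=0$ in both (WD) and (SD), the boundary contribution at $x=1$ disappears; using $xa'\leqslant Ka$ to bound the first integrand on the right from below and rearranging yields
\begin{equation*}
(1-K)\int_0^1 \frac{a(x)u^2(x)}{x^2}\,dx \;\leqslant\; 2\int_0^1 \frac{a(x)u(x)u_x(x)}{x}\,dx \,+\, \lim_{x\to 0^+}\frac{a(x)u^2(x)}{x}.
\end{equation*}
A Cauchy–Schwarz bound on the cross-term by $2\bigl(\int_0^1 a u^2/x^2\bigr)^{1/2}\bigl(\int_0^1 au_x^2\bigr)^{1/2}$, followed by Young's inequality, reduces the proof to controlling the boundary limit at $0$.

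In the (WD) case, $K\in[0,1)$, so $1-K>0$. The boundary condition $u(0)=0$ combined with absolute continuity gives $u(x)=\int_0^x u_y(y)\,dy$, and a weighted Cauchy–Schwarz estimate $u^2(x)\leqslant \bigl(\int_0^x dy/a(y)\bigr)\bigl(\int_0^x a(y)u_y^2(y)\,dy\bigr)$ together with the fact that $a(y)\geqslant a(1)y^K$ near $0$ (a direct consequence of $xa'\leqslant Ka$) shows that the boundary limit at $0$ vanishes, because $\int_0^x a u_y^2\to 0$ while $(a(x)/x)\int_0^x dy/a(y)$ stays bounded. Absorbing the cross-term then closes the estimate with $C=4/(1-K)^2$.

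The principal obstacle is the (SD) case: since $K\in[1,2)$ the coefficient $1-K$ is nonpositive and the rearrangement above becomes vacuous. Here I would replace the multiplier $1/x$ by $1/x^\gamma$ for an exponent $\gamma$ to be chosen in terms of the monotonicity exponent $\theta$ furnished by hypothesis (iii), and redo the same identity with $au^2/x^\gamma$. The Neumann-type condition $(au_x)(0)=0$ together with the monotonicity of $a(x)/x^\theta$ is what makes the boundary term at $0$ manageable and gives the right sign. The output is a modified Hardy inequality bounding $\int_0^1 a(x)u^2/x^{1+\gamma}\,dx$, which near $0$ dominates $\int a u^2/x^2$ thanks to (iii); away from the degeneracy, $a$ is bounded below on $[\delta,1]$ and a classical Poincaré inequality handles that region. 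Patching the two regions yields \eqref{HarPoin}. The delicate technical step, and the main difficulty, is precisely the selection of $\gamma$ compatible with (iii) and the control of the boundary term at $0$ using only the weak Neumann condition $(au_x)(0)=0$ rather than $u(0)=0$.
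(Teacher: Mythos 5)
Note first that the paper does not prove this proposition at all: it is quoted verbatim from \cite[Proposition 2.1]{BOU}, so the relevant comparison is with the proof given there. Your (WD) argument is essentially the classical Hardy computation and is sound: integrating $\frac{d}{dx}\bigl(a u^2/x\bigr)$, using $xa'\leqslant Ka$ with $K<1$, Cauchy--Schwarz and Young, and killing the boundary term at $0$ via $u(0)=0$. (One small repair: the bound $a(y)\geqslant a(1)y^K$ is not enough to control $\frac{a(x)}{x}\int_0^x\frac{dy}{a(y)}$, since $a(x)/x^K$ may blow up as $x\to0^+$; use instead the local comparison $\frac{a(y)}{y^K}\geqslant\frac{a(x)}{x^K}$ for $y\leqslant x$, which is what $xa'\leqslant Ka$ actually gives and yields $\frac{a(x)}{x}\int_0^x\frac{dy}{a(y)}\leqslant\frac{1}{1-K}$.)

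The genuine gap is the (SD) case, which you explicitly leave open, and the route you sketch does not close. With the multiplier $x^{-\gamma}$ the identity gives the good term $\int_0^1\frac{(\gamma a-xa')u^2}{x^{\gamma+1}}dx$, so a positive coefficient forces $\gamma>K\geqslant1$; but then the cross term $2\int_0^1\frac{a|u||u_x|}{x^\gamma}dx$ is bounded via Cauchy--Schwarz by $\bigl(\int_0^1\frac{a u^2}{x^{2\gamma}}dx\bigr)^{1/2}\bigl(\int_0^1 a u_x^2dx\bigr)^{1/2}$, and since $2\gamma>\gamma+1$ the weight $x^{-2\gamma}$ is strictly stronger than $x^{-\gamma-1}$ on $(0,1)$, so it cannot be absorbed by the good term. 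In addition, the boundary term $a(\epsilon)u^2(\epsilon)/\epsilon^{\gamma}$ enters with the unfavorable sign and need not vanish: in the (SD) case there is no Dirichlet condition at $0$, elements of $H^1_a$ may be unbounded near $0$ (e.g. $a(x)=x^{3/2}$ allows $u\sim x^{-1/4}$), and the condition $(au_x)(0)=0$ gives no pointwise control of $u$. The proof the paper relies on exploits hypothesis (iii) in a different way: on a small interval $(0,\beta)$ one uses the monotonicity of $a(x)/x^{\theta}$ to replace $a$ by the pure power $x^{\theta}$ (in both directions of the comparison) and invokes the classical power-weight Hardy inequality $\int_0^1 x^{\theta-2}u^2dx\leqslant\frac{4}{(\theta-1)^2}\int_0^1 x^{\theta}u_x^2dx$ with $\theta\neq1$, and then handles $[\beta,1]$, where $a$ is bounded below, by a standard Poincar\'e-type estimate using $u(1)=0$. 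Without this (or an equivalent mechanism) your argument covers only the weakly degenerate case.
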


It is known that the operator $-\mathcal{M}$ is a definite positive operator. We will use the fact that $\ds H_{a}^2(0,1)$ is compactly embedded in $L^2(0,1)$, see \cite{cmp,Meyer}. 
 Thus, $-\mathcal{M}$ is a self-adjoint positive definite operator  with compact resolvent. 
Therefore, there exists a Hilbertian basis  $(\varPhi_n)_{n\in\N^*}$ of $L^2(0,1)$ and a sequence  $(\lambda_p)_{p\in\N^*}$ of real numbers with $\lambda_n>0$ and $\lambda_n\longrightarrow +\infty$, such that
\begin{equation}
	-\mathcal{M}\varPhi_n=\lambda_n\varPhi_n \qquad \forall n \in \N^*
\end{equation}
\begin{remark}
	In the case $a(x)=x^{\alpha}$ with $0<\alpha\leqslant 1$ as in \cite{Gueye} the eigenfunctions and eigenvalues of $\mathcal{M}$ can be explicitly given  using Bessel's functions.
\end{remark}
Now, we give some algebraic tools. It is  known that  $\ds \mathbb{D}:=\underset{p\geqslant 0}{\cap} D(\mathcal{M}^p)$ is dense in $D(\mathcal{M}^p)$ for every $p\geqslant 0$ and   $\mathbb{D}^{nm}\subset D(\mathcal{K})$. Thus,  $\overline{ D(\mathcal{K})}=L^2(0,1)^{nm}$ and  $\mathcal{K}^*$ is well defined from $D(\mathcal{K}^*)\subset L^2(0,1)^n$ into $L^2(0,1)^{nm}$.  The formal adjoint of $\mathcal{K}$, again  denoted by $\mathcal{K}^*$ is given by
 \begin{equation*}
 	 \mathcal{K}^*=\left[\begin{array}{l}
 	 B^* (L^*)^{n-1}\\
 	 \vdots\\
 	 B^* L^*\\
 	 B^*\\
 	 \end{array}\right],
 \end{equation*}
and it coincides with the adjoint operator of $\mathcal{K}$ on $\mathbb{D}^n$. Moreover, we note that when $a\in \mathcal{C}^{\infty}([0,1])$, from \cite[Proposition 3.8]{cmp},   $\mathbb{D}=\mathcal{C}^{\infty}([0,1])$. Thereafter,  we recall some properties of the Kalman operator $\mathcal{K}$ as it is given in \cite{A4}. For any $j, p \in \N^*$, we consider the projection operator
\begin{equation*}
	 P_p^j~:~\Psi=(\Psi_k)_{1\leqslant k\leqslant j}\in L^2(0,1)^j\rightarrow P_p^j(\Psi)=\left((\Psi_k,\varPhi_p)\right)_{1\leqslant k\leqslant j}\in\R^j ,
\end{equation*}
where $(\cdot,\cdot)$ stands for the scalar product in $L^2(0,1)$. All along this paper,  we denote by $|\cdot |$ the euclidian norm in $\R^j$. Thus, if $j\in\N^*$, 
we have the follwing characterization of $\mathbb{D}^j$
\begin{equation*}
	\mathbb{D}^j=\Big\{\Psi=\sum\limits_{p\geqslant 1}\Psi_p\varPhi_p~:\Psi_p\in\R^j\text{  and }\sum\limits_{p\geqslant 1}\lambda_p^{2m}|\Psi_p|^2<\infty,\,~\forall m\geqslant0 \Big\}.
\end{equation*}
For $p\in\N^*$,  $L_p:=-\lambda_p \mathbf{D}+A\,\in\mathcal{L}(\R^n)$ and
$$\mathcal{K}_p=[L_p|B]=\big[L_p^{n-1}B|\cdots|L_pB|B\big]\,\in\mathcal{L}(\R^{nm},\R^n) .$$
We have the following equalities
\begin{equation*}
	\begin{cases}
	 L(b\varPhi_p)=(L_pb)\varPhi_p,\quad  b\in\R^n,\, p\geqslant 1 ,\\
	  \mathcal{K}(b\varPhi_p)=(\mathcal{K}_pb)\varPhi_p, \quad  b\in\R^{n m},\,p\geqslant 1. \\
	\end{cases}
\end{equation*}
Since   $L$ and $\mathcal{K}$ are closed unbounded operators, one has
\begin{equation*}
	\begin{cases}
	Ly=\sum\limits_{p\geqslant 1} L_p P_p^n(y)\varPhi_p,\,\, \forall y \in D({L}),\\
	\mathcal{K}u=\sum\limits_{p\geqslant 1}\mathcal{K}_p P_p^{n m}(u)\varPhi_p,\,\, \forall u \in D(\mathcal{K}),
	\end{cases}
\end{equation*}
and then
\begin{equation*}
	D(\mathcal{K})=\Big\{u\in L^2(0,1)^{n m}~:~\sum\limits_{p\geqslant 1}{|\mathcal{K}_p P_p^{n m}(u)|}^2<\infty  \Big\}.
\end{equation*}
In a similar way, we obtain
\begin{equation*}
	\begin{cases}
	 	\mathcal{K}^*=\sum\limits_{p\geqslant 1}\mathcal{K}_p^* P_p^n(\cdot)\varPhi_p\,\,\text{  with}\,\, \mathcal{K}_p^*={[L_p|B]}^*,\\
	 	D(\mathcal{K}^*)=\Big\{\varphi\in L^2(0,1)^{n}~:~\sum\limits_{p\geqslant 1}{|\mathcal{K}_p^* P_p^{n}(\varphi)|}^2<\infty  \Big\}.
	\end{cases}
\end{equation*}
We define also the operator $\mathcal{K}\mathcal{K}^*~:~D(\mathcal{K}\mathcal{K}^*)\subset L^2(0,1)^n\longrightarrow L^2(0,1)^n,$ with domain
$$D(\mathcal{K}\mathcal{K}^*)=\Big\{\varphi\in L^2(0,1)^n\,\mathcal{K}^*\varphi\in D(\mathcal{K}),\,\mathcal{K}\mathcal{K}^*\varphi\in L^2(0,1)^n  \Big\}. $$

The operator $\mathcal{K}\mathcal{K}^*$ is  closed,   and a simple computation provides
\begin{equation*}
\begin{cases}
\mathcal{K}\mathcal{K}^*\varphi=\sum\limits_{p\geqslant 1}\mathcal{K}_p\mathcal{K}_p^* P_p^n(\varphi)\varPhi_p,\\
D(\mathcal{K}\mathcal{K}^*)=\Big\{\varphi\in L^2(0,1)^{n}~:~\sum\limits_{p\geqslant 1}{|\mathcal{K}_p^* P_p^{n}(\varphi)|}^2\, ,\,\sum\limits_{p\geqslant 1}{|\mathcal{K}_p\mathcal{K}_p^* P_p^{n}(\varphi)|}^2<\infty  \Big\}.
\end{cases}
\end{equation*}
As in \cite{A4}, we obtain  the following result.
\begin{proposition}\label{prop22}
	The following conditions are equivalent
	\begin{enumerate}
		\item $Ker (\mathcal{K}^*)=\{0\}.$
		\item $Ker (\mathcal{K}\mathcal{K}^*)=\{0\}.$
		\item $det (\mathcal{K}_p\mathcal{K}_p^*)\neq 0\,\,\, \text{ for every } p\geqslant 1.$
	\end{enumerate}
\end{proposition}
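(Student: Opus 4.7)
The strategy is to reduce all three conditions to finite-dimensional statements mode by mode, using the spectral decomposition of $-\mathcal{M}$. The series representations already obtained,
\[
\mathcal{K}^*\varphi \;=\; \sum_{p\ge 1} \mathcal{K}_p^* P_p^n(\varphi)\,\varPhi_p, \qquad \mathcal{K}\mathcal{K}^*\varphi \;=\; \sum_{p\ge 1} \mathcal{K}_p\mathcal{K}_p^* P_p^n(\varphi)\,\varPhi_p,
\]
together with the fact that $(\varPhi_p)_{p\ge 1}$ is an orthonormal basis of $L^2(0,1)$, ensure that each of these sums vanishes in $L^2$ if and only if every Fourier coefficient in $\R^n$ vanishes. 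The problem therefore becomes one about the finite-dimensional matrices $\mathcal{K}_p$, $\mathcal{K}_p^*$ and $\mathcal{K}_p\mathcal{K}_p^*$.

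For (1)$\Leftrightarrow$(2), the inclusion $\text{Ker}(\mathcal{K}^*)\subset\text{Ker}(\mathcal{K}\mathcal{K}^*)$ is immediate: if $\mathcal{K}^*\varphi=0$, then $\mathcal{K}^*\varphi\in D(\mathcal{K})$ trivially and $\mathcal{K}\mathcal{K}^*\varphi=0$. For the reverse direction I would use that each $\mathcal{K}_p\mathcal{K}_p^*$ is a real symmetric positive semidefinite $n\times n$ matrix, so for any $w\in\R^n$
\[
|\mathcal{K}_p^* w|^2 \;=\; \langle \mathcal{K}_p\mathcal{K}_p^* w,\, w\rangle_{\R^n},
\]
which gives $\mathcal{K}_p\mathcal{K}_p^* w=0\Longrightarrow\mathcal{K}_p^* w=0$. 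Applied mode by mode with $w=P_p^n(\varphi)$, this yields $\mathcal{K}\mathcal{K}^*\varphi=0\Longrightarrow\mathcal{K}^*\varphi=0$.

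For (2)$\Leftrightarrow$(3) both implications are short. If (3) holds and $\varphi\in\text{Ker}(\mathcal{K}\mathcal{K}^*)$, then $\mathcal{K}_p\mathcal{K}_p^* P_p^n(\varphi)=0$ for every $p$, and invertibility of each $\mathcal{K}_p\mathcal{K}_p^*$ forces $P_p^n(\varphi)=0$ for every $p$, so $\varphi=0$. Conversely, given $p\ge 1$ and $u\in\R^n$ with $\mathcal{K}_p\mathcal{K}_p^* u=0$, I would test (2) against the single-mode function $\varphi:=u\,\varPhi_p$: its Fourier expansion has only one nonzero term, so $\varphi\in D(\mathcal{K}\mathcal{K}^*)$ and $\mathcal{K}\mathcal{K}^*\varphi=(\mathcal{K}_p\mathcal{K}_p^* u)\,\varPhi_p=0$. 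Hypothesis (2) then gives $\varphi=0$, hence $u=0$, i.e. $\det(\mathcal{K}_p\mathcal{K}_p^*)\neq 0$.

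There is no serious analytic obstacle; the only point deserving attention is membership in the domains of the unbounded operators $\mathcal{K}^*$ and $\mathcal{K}\mathcal{K}^*$, but this is automatic because the series representations used here already encode those domains. The whole argument is thus a transparent reduction to finite-dimensional linear algebra, applied in each Fourier mode.
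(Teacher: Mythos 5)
Your argument is correct and follows essentially the same route as the paper: the inclusion $\mathrm{Ker}(\mathcal{K}^*)\subset \mathrm{Ker}(\mathcal{K}\mathcal{K}^*)$, the identity $\|\mathcal{K}^*\varphi\|^2=\langle \mathcal{K}\mathcal{K}^*\varphi,\varphi\rangle$ (which the paper applies globally and you apply mode by mode via $|\mathcal{K}_p^*w|^2=\langle \mathcal{K}_p\mathcal{K}_p^*w,w\rangle$), and the single-mode test functions $u\,\varPhi_p$ together with the series characterization of the domains. The only difference is this cosmetic global-versus-modewise use of the positive semidefiniteness, so nothing further is needed.
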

\begin{proof}
	To show  $(1)\Rightarrow (2)$, assume that $Ker (\mathcal{K}\mathcal{K}^*)\neq\{0\}$. Then,  there exists a non-zero element $v\in L^2(0,1)^n$ such that $\mathcal{K}\mathcal{K}^* v=0$. Thus, $||\mathcal{K}^* v||^2=0 $.  Therefore, $ \mathcal{K}^* v=0$ and this contradicts $(1)$. The implication
	 $(2)\Rightarrow (1)$ follows  from $Ker (\mathcal{K}^*)\subset Ker (\mathcal{K}\mathcal{K}^*)$.  For
	 $(2)\Rightarrow (3)$, assume  $det (\mathcal{K}_{p_0}\mathcal{K}_{p_0}^*)= 0$ for some ${p_0}\geqslant 1$. Then $0$ is an eigenvalue of $\mathcal{K}_{p_0}\mathcal{K}_{p_0}^*$. Thus, there exists a non-zero vector $v_{p_0}\in \R^n$ such that $\mathcal{K}_{p_0}\mathcal{K}_{p_0}^* v_{p_0}=0$. Therefore,  $\varphi= v_{p_0}\varPhi_{p_0}$ is a non zero element in $D(\mathcal{K}\mathcal{K}^*) $ such that
	 \begin{align*}
	  \mathcal{K}\mathcal{K}^*(\varphi)&= \sum\limits_{p\geqslant 1}\mathcal{K}_p\mathcal{K}_p^* P_p^n(\varphi)\varPhi_p= \mathcal{K}_{p_0}\mathcal{K}_{p_0}^* v_{p_0}\varPhi_{p_0}
	  =0.
	 \end{align*}
	 This would contradict the result $Ker (\mathcal{K}\mathcal{K}^*)=\{0\}$. 	 Finally, to 
$(3)\Rightarrow (2)$, let $y\in Ker (\mathcal{K}\mathcal{K}^*)$. Hence 
	 	$\sum\limits_{p\geqslant 1}\mathcal{K}_p\mathcal{K}_p^* P_p^n(y)\varPhi_p =0$, and then  $ \mathcal{K}_p\mathcal{K}_p^* P_p^n(y) =0$ for all $p\geqslant 1$. 
	 	Therefore, $P_p^n(y) =0$,  since  $det (\mathcal{K}_p\mathcal{K}_p^*)\neq 0 $ for all $p\geqslant 1$. Thus,   
	 	$y=\sum\limits_{p\geqslant 1} P_p^n(y)\varPhi_p =0.$
\end{proof}
The previous proposition is of great interest, since it allows us to check the following Theorem whose the proof, in our degenerate case,  is similar to  \cite[Theorem 2.1]{A4}.
\begin{theorem}\label{theorem2.5}
We have  the following properties
\begin{enumerate}
	\item there exists a constant $ C>0$ such that for all $u\in {D(\mathcal{M}^{n-1})}^{nm}, \,\, \mathcal{K}u\in L^2(0,1)^n$ and
		$${ \|\mathcal{K}u \| }^2_{L^2(0,1)^n}\leqslant C { \|\mathcal{M}^{n-1} u \| }^2_{L^2(0,1)^{n m}},$$
\item there exists a constant $ C>0$ such that for all $u\in {D(\mathcal{M}^{n-1})}^{n}, \,\, \mathcal{K}^*u\in L^2(0,1)^{n m}$ and
$${ \|\mathcal{K}^*u \| }^2_{L^2(0,1)^{n m}}\leqslant C { \|\mathcal{M}^{n-1} u \| }^2_{L^2(0,1)^{n}},$$
\item assume $Ker (\mathcal{K}^*)=\{0\}$, and  let $k\geqslant (2n-1)(n-1)$. Then, for every $\varphi \in L^2(0,1)^n$ satisfying $\mathcal{K}^*\varphi\in {D(\mathcal{M}^k)}^{n m}$, one has $\varphi \in  {D(\mathcal{M}^{k-(2n-1)(n-1)})}^{n}$ and
$${\|\mathcal{M}^{k-(2n-1)(n-1)}\varphi \|}^2_{L^2(0,1)^n}\leqslant C  {\|\mathcal{M}^k \mathcal{K}^*\varphi \|}^2_{L^2(0,1)^{n m}}.$$
\end{enumerate}	
\end{theorem}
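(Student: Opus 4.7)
The plan is to work entirely in the spectral representation of $\mathcal{M}$, exploiting the identities for $L$, $\mathcal{K}$ and $\mathcal{K}^*$ established just before Proposition \ref{prop22}.

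For parts $(1)$ and $(2)$ I would apply Parseval's identity. Writing $u=\sum_p P_p^{nm}(u)\varPhi_p$ and using $\mathcal{K}(b\varPhi_p)=(\mathcal{K}_p b)\varPhi_p$,
$$\|\mathcal{K}u\|_{L^2(0,1)^n}^2=\sum_{p\geqslant 1}|\mathcal{K}_p P_p^{nm}(u)|^2,\qquad \|\mathcal{M}^{n-1}u\|_{L^2(0,1)^{nm}}^2=\sum_{p\geqslant 1}\lambda_p^{2(n-1)}|P_p^{nm}(u)|^2.$$
Since $L_p=-\lambda_p\mathbf{D}+A$, the entries of the $n\times nm$ matrix $\mathcal{K}_p=[L_p^{n-1}B|\cdots|B]$ are polynomials in $\lambda_p$ of degree at most $n-1$, so $\|\mathcal{K}_p\|\leqslant C\lambda_p^{n-1}$ uniformly in $p$ (using $\lambda_p\geqslant\lambda_1>0$). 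A term-by-term comparison then yields $(1)$, and $(2)$ is its mirror version with $\mathcal{K}^*(y\varPhi_p)=(\mathcal{K}_p^* y)\varPhi_p$ and the same polynomial degree bound on $\mathcal{K}_p^*$.

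For part $(3)$, the same identities reduce the claim to the finite-dimensional estimate
$$|v|^2\leqslant C\,\lambda_p^{2(2n-1)(n-1)}\,|\mathcal{K}_p^* v|^2,\qquad v\in\R^n,\;p\geqslant 1,\qquad (\star)$$
uniform in $p$. Indeed, once $(\star)$ is available,
$$\|\mathcal{M}^{k-(2n-1)(n-1)}\varphi\|^2=\sum_p\lambda_p^{2(k-(2n-1)(n-1))}|P_p^n(\varphi)|^2\leqslant C\sum_p\lambda_p^{2k}|\mathcal{K}_p^* P_p^n(\varphi)|^2=C\|\mathcal{M}^k\mathcal{K}^*\varphi\|^2,$$
which both identifies $\varphi\in D(\mathcal{M}^{k-(2n-1)(n-1)})^n$ and proves the announced inequality.

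To establish $(\star)$ I would control the smallest singular value $\sigma_n=\sigma_n(\mathcal{K}_p)$, positive by Proposition \ref{prop22}$(3)$. With $\sigma_1\geqslant\cdots\geqslant\sigma_n>0$,
$$|\mathcal{K}_p^* v|^2=\langle\mathcal{K}_p\mathcal{K}_p^* v,v\rangle\geqslant\sigma_n^2|v|^2,\qquad \sigma_n^2\geqslant\frac{\det(\mathcal{K}_p\mathcal{K}_p^*)}{\sigma_1^{2(n-1)}},\qquad \sigma_1\leqslant\|\mathcal{K}_p\|\leqslant C\lambda_p^{n-1}.$$
The decisive step is a lower bound on $\det(\mathcal{K}_p\mathcal{K}_p^*)$: the map $\lambda\mapsto\det(\mathcal{K}(\lambda)\mathcal{K}(\lambda)^*)$ is a polynomial in $\lambda$ taking strictly positive values along the unbounded sequence $\lambda_p$ (Proposition \ref{prop22}$(3)$ and the Kalman hypothesis), hence its leading coefficient is positive and $\det(\mathcal{K}_p\mathcal{K}_p^*)^{-1}$ is at most polynomially large in $\lambda_p$. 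Combining this with the upper bound on $\sigma_1^{2(n-1)}$ gives $(\star)$. The main technical obstacle is precisely this last estimate, where one must absorb both the factor $\sigma_1^{2(n-1)}\leqslant C\lambda_p^{2(n-1)^2}$ and the possible inverse growth of the determinant into a single uniform polynomial in $\lambda_p$; the crude exponent $(2n-1)(n-1)$ accommodates both contributions. The whole argument parallels the non-degenerate treatment in \cite[Theorem 2.1]{A4}, which only uses that $\mathcal{M}$ is self-adjoint with compact resolvent and positive eigenvalues diverging to $+\infty$, both of which hold in our degenerate setting.
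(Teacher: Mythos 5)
Your proof is correct and follows essentially the route the paper intends: the paper gives no argument of its own beyond the spectral identities of Section 2 and the citation of \cite[Theorem 2.1]{A4}, and your Parseval reduction to the uniform matrix estimate $(\star)$, with the singular-value/determinant bookkeeping producing the exponent $(2n-1)(n-1)=2(n-1)^2+(n-1)$, is precisely that adapted argument. The only point to phrase more carefully is the lower bound on $\det(\mathcal{K}_p\mathcal{K}_p^*)$: since this is a nonnegative polynomial in $\lambda_p$, not identically zero and strictly positive at every $\lambda_p$, it is bounded below by a positive constant uniformly in $p$ (not merely "with polynomially bounded inverse"), and it is this uniform bound that makes your fixed exponent suffice.
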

By adapting the proof of  \cite[Theorem 2.1]{A4} to our case and using the fact that the polynomial $F(\lambda): = $  is either  identically $0$ or far from $0$ for any $\lambda$ sufficiently large, one can deduce the following corollary:
\begin{corollary}
	Either there exists $p_0\in\N^*$ such that $rank~\mathcal{K}_p=n$ for every $p>p_0$ or $rank~\mathcal{K}_p<n$ for every $p\in\N^*$
\end{corollary}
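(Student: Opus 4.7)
The plan is to exploit the polynomial structure of $\det(\mathcal{K}_p\mathcal{K}_p^*)$ as a function of the eigenvalue $\lambda_p$. Recall that $L_p=-\lambda_p\mathbf{D}+A$, so each power $L_p^k$ is an $n\times n$ matrix whose entries are polynomials in $\lambda_p$ of degree at most $k$. Consequently, viewing $\lambda$ as a free real parameter, the matrix
\[
\mathcal{K}(\lambda):=\big[(-\lambda\mathbf{D}+A)^{n-1}B\,\big|\,\cdots\,\big|\,(-\lambda\mathbf{D}+A)B\,\big|\,B\big]\in\mathcal{L}(\R^{nm},\R^n)
\]
has all of its entries in $\R[\lambda]$.

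From this I would introduce the scalar polynomial
\[
F(\lambda):=\det\bigl(\mathcal{K}(\lambda)\mathcal{K}(\lambda)^*\bigr)\in\R[\lambda],
\]
so that $F(\lambda_p)=\det(\mathcal{K}_p\mathcal{K}_p^*)$ for every $p\geqslant 1$. Since $F$ is a real polynomial in one variable, exactly one of the following holds: either $F\equiv 0$, or $F$ has only finitely many real zeros, in which case there exists $R>0$ such that $F(\lambda)\neq 0$ for all $\lambda>R$ (this is the ``far from $0$ for $\lambda$ sufficiently large'' statement used in the hint).

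I would then match these two alternatives to the conclusion. In the first case, $F\equiv 0$ gives $\det(\mathcal{K}_p\mathcal{K}_p^*)=0$ for every $p\in\N^*$; since $\mathcal{K}_p$ is an $n\times nm$ matrix and $\mathrm{rank}\,\mathcal{K}_p=\mathrm{rank}(\mathcal{K}_p\mathcal{K}_p^*)$, this yields $\mathrm{rank}\,\mathcal{K}_p<n$ for all $p\in\N^*$. In the second case, using that $\lambda_p\to+\infty$ (by the spectral properties of $-\mathcal{M}$ recalled in Section~2), one can choose $p_0\in\N^*$ such that $\lambda_p>R$ for all $p>p_0$; then $F(\lambda_p)\neq 0$ for such $p$, i.e.\ $\det(\mathcal{K}_p\mathcal{K}_p^*)\neq 0$, giving $\mathrm{rank}\,\mathcal{K}_p=n$ for all $p>p_0$.

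There is no real obstacle: the only thing to be careful about is the bookkeeping that $\mathcal{K}(\lambda)$ has polynomial entries (so $F$ is a polynomial in a single variable, not a rational or more complicated function), and that the two alternatives for a nonzero polynomial versus the zero polynomial translate cleanly into the two cases in the statement. The use of $\lambda_p\to+\infty$ is what allows us to pass from ``finitely many zeros of $F$'' to ``finitely many bad indices $p$''.
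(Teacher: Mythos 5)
Your proposal is correct and follows essentially the same route the paper intends: it completes the paper's (truncated) definition of $F(\lambda)$ as $\det\bigl(\mathcal{K}(\lambda)\mathcal{K}(\lambda)^*\bigr)$ with $\mathcal{K}(\lambda)=[-\lambda\mathbf{D}+A\,|\,B]$, uses that a one-variable real polynomial is either identically zero or nonvanishing for large $\lambda$, and transfers this to the indices $p$ via $\lambda_p\to+\infty$ together with $\mathrm{rank}\,\mathcal{K}_p=\mathrm{rank}\,(\mathcal{K}_p\mathcal{K}_p^*)$. This is exactly the adaptation of \cite[Theorem 2.1]{A4} the paper alludes to, so no further comment is needed.
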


\section{Carleman estimates}
In this section we give a new global Carleman estimate for the adjoint problem \eqref{syst2adjoint}.
By the same way as in \cite[Proposition 3.3]{A4}, we can show the following result.
\begin{proposition}\label{propo:4.1}
	If $\varphi=(\varphi_1,\cdots,\varphi_n)^*$ is the solution of problem \eqref{syst2adjoint}  corresponding to initial data $\varphi_0\in X:=\mathbb{D}^n$, then $\varphi \in \mathcal{C}^k([0,T]; D(\mathcal{M}^p)^n) $ for every $k,\, p\geqslant 0$, and
	\begin{equation}
	det (I_d\partial_t+(\mathbf{D}\mathcal{M}+A)^*)\varphi_i=0\qquad \text{in } Q,\,1\leqslant i\leqslant n.
	\end{equation}
\end{proposition}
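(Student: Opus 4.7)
The plan has two main pieces: first establish the claimed $\mathcal{C}^k([0,T];D(\mathcal{M}^p)^n)$ regularity, and then derive the scalar $2n$-th order equation for each component $\varphi_i$ by a commutative-algebra (adjugate) identity.

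\textbf{Regularity.} After reversing time, $\varphi$ is obtained from the semigroup $S(t)$ generated by $\mathbf{D}^*\mathcal{M}+A^*$ on $L^2(0,1)^n$. Each entry of this generator has the form $d^*_{ij}\mathcal{M}+a^*_{ij}$ with scalar coefficients, so $\mathcal{M}^p$ (acting diagonally on a vector) commutes with $\mathbf{D}^*\mathcal{M}+A^*$ on $D(\mathcal{M}^{p+1})^n$. Consequently $S(t)$ leaves each $D(\mathcal{M}^p)^n$ invariant, and since $\varphi_0 \in \mathbb{D}^n = \bigcap_p D(\mathcal{M}^p)^n$, we obtain $\varphi \in \mathcal{C}^0([0,T];D(\mathcal{M}^p)^n)$ for every $p\geqslant 0$. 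For time regularity I differentiate the equation: $\partial_t\varphi = -(\mathbf{D}^*\mathcal{M}+A^*)\varphi$ already lies in every $\mathcal{C}^0([0,T];D(\mathcal{M}^p)^n)$, and a straightforward induction gives $\partial_t^k\varphi \in \mathcal{C}^0([0,T];D(\mathcal{M}^p)^n)$ for all $k,p\geqslant 0$.

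\textbf{Scalar equation.} Set $\mathbf{P} := I_d\partial_t + (\mathbf{D}\mathcal{M}+A)^* = I_d\partial_t + \mathbf{D}^*\mathcal{M}+A^*$. On functions with the regularity obtained above, spatial and temporal derivatives commute, so each entry $\mathbf{P}_{ij}$ is a polynomial in the two scalar operators $\partial_t$ and $\mathcal{M}$ with real coefficients; in other words $\mathbf{P}$ is a matrix over the commutative ring $\R[\partial_t,\mathcal{M}]$. The classical adjugate identity then applies: $\mathrm{adj}(\mathbf{P})\cdot\mathbf{P} = \det(\mathbf{P})\,I_d$. Applying $\mathrm{adj}(\mathbf{P})$ to the vector equation $\mathbf{P}\varphi = 0$ gives $\det(\mathbf{P})\varphi_i = 0$ for each $i\in\{1,\ldots,n\}$, which is exactly the desired scalar equation of order $2n$ in $x$ (and order $n$ in $t$).

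\textbf{Main obstacle.} The only subtle point is to ensure enough regularity to justify every step of the adjugate computation: the entries of $\mathrm{adj}(\mathbf{P})$ carry derivatives of total order up to $n-1$ in each of $\partial_t$ and $\mathcal{M}$, while $\det(\mathbf{P})$ produces $\partial_t^n$ and $\mathcal{M}^n$. This is precisely what Step 1 delivers and it is the reason for the strong hypothesis $\varphi_0\in\mathbb{D}^n$. Beyond this bookkeeping, the argument is purely algebraic, and no further technical difficulty is expected; the diagonalizability assumption \eqref{D:diagonalizable} on $\mathbf{D}$ plays no role here since we only use that its entries are scalars.
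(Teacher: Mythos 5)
Your proposal is correct and takes essentially the same route as the paper, which proves this proposition simply by invoking the argument of \cite[Proposition 3.3]{A4}: regularity from the spectral/semigroup structure of $\mathbf{D}^*\mathcal{M}+A^*$ (in \cite{A4} this is written via the eigenfunction expansion $\varphi(t)=\sum_p e^{(T-t)(-\lambda_p\mathbf{D}^*+A^*)}P_p^n(\varphi_0)\varPhi_p$, which also gives the cleanest justification of your semigroup-invariance step), followed by applying the adjugate of $I_d\partial_t+\mathbf{D}^*\mathcal{M}+A^*$ over the commutative ring $\R[\partial_t,\mathcal{M}]$ to obtain $\det(I_d\partial_t+(\mathbf{D}\mathcal{M}+A)^*)\varphi_i=0$. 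The only caveat is your closing remark: diagonalizability of $\mathbf{D}$ with positive eigenvalues is what guarantees the uniform bounds on the matrix exponentials (equivalently, generation of the semigroup) underlying the regularity step, so it is not entirely without role here.
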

In order to state our fundamental result, we need to show first some Carleman estimates in the case of a single parabolic degenerate equation.
\subsection{Carleman estimate for one equation}\noindent\newline
In this subsection  we shall establish  a new  Carleman estimate for the solution
of the following  parabolic equation 
\begin{equation}\label{problem}
\begin{cases}
u_t-(a(x)u_x)_x+\tilde{c}u=f,\quad (t,x)\in Q, \\
	u(t,1)=0 \text{ and }\begin{cases}
	\ds u(t,0)=0,\,\,\text{ in case (WD)}  \\
	\ds  {(a(x) u_{ x})}(t,0)=0,\,\,\text{ in case (SD) }
	\end{cases}\\
u(0,x)=u_0(x),   \quad x\in (0,1).
\end{cases}
\end{equation}
Let us consider the following time and space weight functions
 \begin{equation}\label{fonctionspoids}
 \begin{cases}
 \ds \theta(t)=\frac{1}{t^4(T-t)^4}\,\,, \psi(x)=\lambda\left(\int_0^x\frac{y}{a(y)}dy-c\right)\,\,\text{ and }\, \varphi(t,x)=\theta(t)\psi(x),\\
 \Phi(t,x)=\theta(t)\Psi(x)\,\,  \,\text{ and  }\,\, \Psi(x)=e^{\rho\sigma(x)} -e^{2\rho{\parallel\sigma\parallel}_{\infty}},
 \end{cases}
 \end{equation}

  where the parameters $c$, $\rho$ and $\lambda$ are  chosen as in \cite{hjjaj}

  	\begin{equation*}
  		\begin{cases}
  			\ds c>5 \,\,,\quad \,\, \rho>\frac{4\ln2}{{\parallel\sigma\parallel}_{\infty}}.\\
  			\ds\frac{e^{2\rho{\parallel\sigma\parallel}_{\infty}}}{c-1}<\lambda<\frac{4}{3c}{\left( e^{2\rho{\parallel\sigma\parallel}_{\infty}}-e^{\rho{\parallel\sigma\parallel}_{\infty}}\right)}.
  		\end{cases}
  	\end{equation*}
The following Carleman estimate will be crucial for the aim of this subsection. Note that the Carleman estimate needed in this work is  different from the one showed in \cite{BOU} and used in \cite{FadiliManiar},  since it  involves in addition to $u$ and $u_x$ the terms $u_t$ and $\mathcal{M}u$. 
\begin{theorem} \label{thm3.1}
	let $T>0$.
	Then there exist two positive constants $C$ and $s_0$ such that, for all $\ds u_0\in H_a^1$,
	the solution $u$ of equation \eqref{problem} satisfies,
	\begin{equation} \label{equ:3.28}
	\begin{aligned}
	&\int\!\!\!\!\!\int_{Q} \Big(\frac{1}{s\theta}(u_t^2+(\mathcal{M}u)^2)+s\theta a(x)u_x^2+s^3 \theta^3
	\frac{x^2}{a(x)}u^2 \Big)e^{2s\varphi}\,dx\,dt  \\
	& \leqslant C\Big( \int\!\!\!\!\!\int_{Q} f^2 e^{2s\varphi}\,dx\,dt
	+ s a(1)\int_{0}^T \theta u_x^2(t,1)e^{2s\varphi} dt\Big)
	\end{aligned}
	\end{equation}
	 for all $s\geq s_0$.
\end{theorem}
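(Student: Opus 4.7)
The plan is to derive \eqref{equ:3.28} from the ``standard'' degenerate Carleman estimate (of the type established in \cite{BOU}) which, for the same weights $\varphi$ and $\theta$ in \eqref{fonctionspoids}, already bounds the two genuinely distributed terms
\begin{equation*}
\int\!\!\!\!\!\int_{Q}\Big(s\theta\, a(x)u_x^2+s^3\theta^3\frac{x^2}{a(x)}u^2\Big)e^{2s\varphi}\,dx\,dt
\end{equation*}
by the right-hand side of \eqref{equ:3.28}. The new work is therefore to upgrade this with the two extra terms $\frac{1}{s\theta}u_t^2 e^{2s\varphi}$ and $\frac{1}{s\theta}(\mathcal{M}u)^2 e^{2s\varphi}$, which I propose to produce by using the equation itself rather than by rewriting a new Carleman identity from scratch. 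Step 1 is thus to quote/record the standard Carleman inequality for the degenerate operator $\partial_t - \mathcal{M}+\tilde{c}$ with boundary conditions of type (WD)/(SD), which gives the first two terms on the left-hand side up to the claimed right-hand side.

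Step 2 is the production of the $\mathcal{M}u$ term. I multiply \eqref{problem} by $\frac{e^{2s\varphi}}{s\theta}\,\mathcal{M}u$ and integrate on $Q$: the square $\int\!\!\int_Q \frac{1}{s\theta}(\mathcal{M}u)^2 e^{2s\varphi}$ appears with the right sign, while the right-hand side features $\int\!\!\int_Q \frac{e^{2s\varphi}}{s\theta}(u_t + \tilde c u - f)\mathcal{M}u$. The $f\cdot\mathcal{M}u$ and $u\cdot \mathcal{M}u$ contributions are immediately treated by Young's inequality (the $u^2$ piece being absorbed by $s^3\theta^3(x^2/a)u^2$ after noting $1/(s\theta)\leqslant C/(s^4\theta^4)\cdot s^3\theta^3(x^2/a)(a/x^2)$ with $\theta$ bounded from below on any compact subinterval of $(0,T)$, and $x^2/a$ bounded from the degeneracy hypotheses via Hardy--Poincar\'e). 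The delicate term is $\int\!\!\int_Q \frac{e^{2s\varphi}}{s\theta}u_t(au_x)_x$: integrating by parts in $x$ the boundary pieces at $x=0$ and $x=1$ vanish by differentiating the Dirichlet conditions in $t$ (so $u_t(t,1)=0$ and, in (WD), $u_t(t,0)=0$; in (SD), $(au_x)(t,0)=0$ already kills the factor), so only $-\int\!\!\int_Q au_x\bigl(u_{tx}\tfrac{e^{2s\varphi}}{s\theta}+2u_t\varphi_x\tfrac{e^{2s\varphi}}{\theta}\bigr)$ remains. The first contribution is handled by writing $a u_x u_{tx}=\tfrac{1}{2}a(u_x^2)_t$ and integrating by parts in $t$: the temporal boundary terms at $t=0,T$ vanish because $e^{2s\varphi}/\theta$ decays super-polynomially at the endpoints, and the bulk term produces $\int\!\!\int a u_x^2\bigl(\tfrac{e^{2s\varphi}}{s\theta}\bigr)_t$ which is bounded by $C\int\!\!\int s\theta^{1/4}\,a u_x^2 e^{2s\varphi}$ using $|\theta_t|\leqslant C\theta^{5/4}$; this is absorbed by the $s\theta au_x^2$ term of the base Carleman for $s$ large. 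The second contribution reduces, via $a\varphi_x=\lambda\theta x$, to $-2\lambda\int\!\!\int x u_t u_x e^{2s\varphi}$, handled by Young's inequality with weights $1/(s\theta)$ and $s\theta$: the $u_x^2$ part enters the base Carleman (using $x\leqslant C\sqrt{a(x)x^2/a(x)}$ and the $x^2/a$ boundedness near $0$), and the $u_t^2$ part is absorbed into the LHS once Step 3 has been carried out with small enough constant.

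Step 3 is then trivial: writing $u_t=\mathcal{M}u-\tilde c u+f$, squaring, dividing by $s\theta$ and multiplying by $e^{2s\varphi}$ yields $\int\!\!\int_Q \frac{1}{s\theta}u_t^2 e^{2s\varphi}\leqslant 3\int\!\!\int_Q \frac{1}{s\theta}\bigl((\mathcal{M}u)^2+\tilde c^2 u^2+f^2\bigr)e^{2s\varphi}$, where the first term comes from Step 2, the second is absorbed as in Step 2, and the last is on the right-hand side of \eqref{equ:3.28}. The combination of Steps 1--3 yields the claim. The main obstacle is Step 2: one has to carry out two successive integrations by parts (first in $x$, then in $t$), verify that all boundary contributions vanish separately in the (WD) and (SD) regimes using the differentiated Dirichlet/flux conditions and the super-polynomial decay of the weights at $t=0,T$, and then check that each of the generated extra integrals can be absorbed either by the base Carleman bulk terms (for $s$ large) or into the new LHS term $\int\!\!\int\frac{1}{s\theta}u_t^2 e^{2s\varphi}$ with a controlled small constant. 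The choice of the multiplier $\frac{e^{2s\varphi}}{s\theta}$ — rather than $e^{2s\varphi}$ — is precisely what makes the produced cross-terms of lower order with respect to the weights already present in the standard estimate.
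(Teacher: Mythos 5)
Your strategy is viable, but it is a genuinely different route from the paper's. The paper never re-derives the new terms by multiplier arguments on the original equation: it works with the conjugated variable $w=e^{s\varphi}u$ and exploits the fact that the machinery of \cite{BOU} (its Lemmas 3.4--3.6, not just the final stated estimate) controls the residual norms $\|L_s^+w\|^2+\|L_s^-w\|^2$ in addition to the two bulk terms; the bounds on $\tfrac{1}{s\theta}w_t^2$ and $\tfrac{1}{s\theta}(\mathcal{M}w)^2$ then follow purely algebraically by expanding $\tfrac{1}{\sqrt{s\theta}}L_s^{\mp}w$ and estimating the leftover lower-order pieces with the Hardy--Poincar\'e interpolation \eqref{equ:3.32}. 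Your version needs only the published statement of the degenerate Carleman estimate, and it yields the inequality directly in $u$ rather than in $w$ (the paper silently converts $w$ back to $u$ at the end); the price is the two integrations by parts, whose boundary terms at $x=0,1$ and $t=0,T$ and the regularity needed for $u_{tx}$ must be justified (e.g. by smoothing the data and passing to the limit), and the closing bookkeeping between your Steps 2 and 3 --- which does close, since the integration by parts puts an arbitrarily small constant in front of $\iint\tfrac{1}{s\theta}u_t^2e^{2s\varphi}$ while Step 3 only loses a fixed factor, and the a priori finiteness of these integrals for $u_0\in H^1_a$ legitimizes moving the $\varepsilon$-terms to the left.

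One justification in your write-up is wrong as stated and must be repaired. To absorb $\iint\tfrac{1}{s\theta}u^2e^{2s\varphi}$ (and the $\tilde c^2u^2$ term of Step 3) you invoke the pointwise bound $\tfrac{1}{s\theta}\leqslant \tfrac{C}{s^4\theta^4}\,s^3\theta^3\tfrac{x^2}{a}\cdot\tfrac{a}{x^2}$, which requires $a/x^2$ to be bounded; it is not (for $a(x)=x^{\alpha}$ one has $a/x^2=x^{\alpha-2}\to\infty$ at the degeneracy point) --- only $x^2/a$ is bounded, and that goes the wrong way. The correct argument is exactly the paper's step \eqref{equ:3.32}: apply Hardy--Poincar\'e \eqref{HarPoin} together with the Young splitting with exponents $3/4$ and $1/4$ to $e^{s\varphi}u$, then expand $(e^{s\varphi}u)_x$ and use $a\varphi_x^2=\lambda^2\theta^2\tfrac{x^2}{a}$, which bounds $\iint s\theta\,u^2e^{2s\varphi}$ (hence your more weakly weighted term) by $C\iint\big(s\theta a u_x^2+s^3\theta^3\tfrac{x^2}{a}u^2\big)e^{2s\varphi}$. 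A smaller bookkeeping remark: after the $t$-integration by parts the factor is $\partial_t\big(e^{2s\varphi}/(s\theta)\big)=O(\theta^{1/4})e^{2s\varphi}$ with no extra $s$ (the $1/s$ of the multiplier cancels the $s$ from differentiating $e^{2s\varphi}$); in any case these $au_x^2$ contributions, like the one from $x\,u_tu_x$, need only be dominated by the base bulk term $s\theta a u_x^2$ with a fixed (not small) constant, since that term is already controlled by the right-hand side of \eqref{equ:3.28}. With these corrections your proof closes.
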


\begin{proof}
	Let $u$ be the solution of equation \eqref{problem}. For $s>0$,  the
	function $w=e^{s\varphi}u$ satisfies
	\[
	\underbrace{-(aw_x)_x-s\varphi_tw-s^2a\varphi_x^2w}_{L_s^+w}
	+\underbrace{w_t+2sa\varphi_xw_x+s(a\varphi_x)_xw}_{L_s^-w}
	=\underbrace{fe^{s\varphi}-\tilde{c}w}_{f_s}.
	\]
	Moreover, from the  Lemma 3.4, 3.5 and 3.6 in \cite{BOU}, we can deduce the following estimate to $w$  in $(0,T)\times (0,1)$
\begin{equation}\label{equ:3.291}
		\begin{aligned}
		& \|L_s^+w\|^2+\|L_s^-w\|^2+\iint_{Q}
		\Big(s^3 \theta^3\frac{x^2}{a(x)}w^2+s\theta a(x)w_x^2 \Big)\,dx\,dt  \\
		& \leqslant C\Big( \iint_{Q} (fe^{s\varphi}-\tilde{c}w)^2\,dx\,dt
		+ sa(1)\int_{0}^T \theta(t) w_x^2(t,1) dt\Big)\\
		& \leqslant C\Big( \iint_{Q} f^2e^{2s\varphi}+\iint_{Q}\tilde{c}^2w^2 \,dx\,dt
		+ sa(1)\int_{0}^T \theta(t) w_x^2(t,1) dt\Big).\\
		\end{aligned}
\end{equation}
	Using the same technique as in \cite{hjjaj} and \cite{BOU} the term $\ds\iint_{Q}\tilde{c}^2w^2 \,dx\,dt$ can be absorbed by the last two terms in the left side of inequality \eqref{equ:3.291}. Thus
		\begin{equation}\label{equ:3.29}
		\begin{aligned}
		& \|L_s^+w\|^2+\|L_s^-w\|^2+\iint_{Q}
		\Big(s^3 \theta^3\frac{x^2}{a(x)}w^2+s\theta a(x)w_x^2 \Big)\,dx\,dt  \\
		& \leqslant C\Big( \iint_{Q} f^2 e^{2s\varphi}\,dx\,dt
		+ sa(1)\int_{0}^T \theta(t) w_x^2(t,1) dt\Big).
		\end{aligned}
		\end{equation}
	Using the previous estimate, we will bound the integral
	$\ds \int\!\!\!\!\!\int_{Q}\frac{1}{s\theta}u_t^2 e^{2s\varphi}\,dx\,dt$.
	In fact, we have
	\begin{align*}
	\frac{1}{\sqrt{s\theta}}L_s^-w &=\frac{1}{\sqrt{s\theta}}(w_t+2sa\varphi_xw_x+s(a\varphi_x)_xw) \\
	& =\frac{1}{\sqrt{s\theta}}w_t+2 \sqrt{s\theta}x w_x+\sqrt{s\theta}w.
	\end{align*}
	Therefore,
	\begin{align}\label{equ:3.30}
	\iint_{Q}\frac{1}{s\theta}w_t^2 \,dx\,dt&\leqslant C\left(\|L_s^-w\|^2+\iint_{Q} s\theta\frac{x^2}{a}aw_x^2\,dx\,dt+\iint_{Q} s\theta w^2\,dx\,dt \right).
	\end{align}
	Since the function $\ds x \longmapsto \frac{x^2}{a}$ is nondecreasing, then one has
	\begin{equation}\label{equ:3.31}
	\iint_{Q} s\theta\frac{x^2}{a}aw_x^2\,dx\,dt\leqslant \frac{1}{a(1)}\iint_{Q} s\theta a w_x^2\,dx\,dt.
	\end{equation}
	Thanks to the Hardy-Poincar\'e inequality \eqref{HarPoin}, we can estimate $\ds \iint_{Q} s\theta w^2\,dx\,dt$ as follow
	\begin{align*}
	\iint_{Q} s\theta w^2\,dx\,dt
	& = s \iint_{Q} \Big( \theta\frac{a^{1/3}}{x^{2/3}}w^2
	\Big)^{3/4}\Big( \theta\frac{x^2}{a}w^2\Big)^{1/4}\,dx\,dt \\
	& \leqslant s \frac32 \iint_{Q} \theta\frac{a^{1/3}}{x^{2/3}}w^2\,dx\,dt
	+ \frac{s}{2} \iint_{Q}  \theta\frac{x^2}{a}w^2 \,dx\,dt.
	\end{align*}
	Let $p(x)= x^{\frac{4}{3}} a^{\frac{1}{3}}$, then one has, $\ds p(x)=a\left(\frac{x^2}{a}\right)^{\frac{2}{3}}\leqslant C a(x)$ since the function $\ds x\longmapsto \frac{x^2}{a}$ is non-decreasing on $(0,1)$.
	\begin{equation}\label{equ:C_{HP}}
	\int_0^1\frac{a^{\frac{1}{3}}}{x^{\frac{2}{3}}} w^2 dx =\int_0^1 \frac{p(x)}{x^2}w^2 dx
	\leqslant C \int_0^1 \frac{a(x)}{x^2}w^2 dx
	\leqslant C \int_0^1 a(x)w_x^2 dx.
	\end{equation}
	
	Thus
	\begin{align}\label{equ:3.32}
	\iint_{Q} s\theta w^2\,dx\,dt
	&\leqslant C \iint_{Q} \Big( s\theta aw_x^2 +s^3\theta^3\frac{x^2}{a}w^2
	\Big)dx\,dt.
	\end{align}
		From \eqref{equ:3.30}-\eqref{equ:3.32}, we get
			\begin{align}\label{equcor:3.39}
			\int\!\!\!\!\!\int_{Q}\frac{1}{s\theta}w_t^2 \,dx\,dt&\leqslant C\left(\|L_s^-w\|^2+  \iint_{Q} s \theta a w_x^2\,dx\,dt+\int\!\!\!\!\!\int_{Q} s^3\theta^3\frac{x^2}{a} w^2 \,dx\,dt \right).
			\end{align}

	In a similar way, to bound the integral $\ds \int\!\!\!\!\!\int_{Q}\frac{1}{s\theta}(\mathcal{M}w)^2 \,dx\,dt$, we have
	\begin{align*}
	\frac{1}{\sqrt{s\theta}}L_s^+w &=\frac{1}{\sqrt{s\theta}}(-(aw_x)_x-s\varphi_tw-s^2a\varphi_x^2w ) \\
	& =\frac{-\mathcal{M}w}{\sqrt{s\theta}}-\sqrt{s}\frac{\dot{\theta}\psi}{\sqrt{\theta}}w-  s^{\frac32}\theta^{\frac32}\frac{x^2}{a}w.
	\end{align*}
	As $\ds\Big| \dot{\theta}{\theta}^{-1}(t)\Big| =\Big|\frac{-4(T-2t)}{t(T-t)}\Big|\leqslant 4T {\theta}^{1/4}$ and  for all $t\in [0,T],\, |\theta(t)|\geqslant (\frac{2}{T})^8 $, then we deduce  $\ds \Big|\frac{\dot{\theta}}{\sqrt{\theta}}\Big|\leqslant~C~\theta^{\frac32}$. Since the function $\psi$ is bounded on $(0,1)$ then
	$\ds \int\!\!\!\!\!\int_{Q}(\sqrt{s}\frac{\dot{\theta}\psi}{\sqrt{\theta}}w)^2dx\,dt\leqslant C\int\!\!\!\!\!\int_{Q} s\theta^{\frac32}w^2 dx\,dt$.
	By using inequality \eqref{equ:C_{HP}}, we infer
	\begin{align*}
	\iint_{Q} s\theta^{\frac32} w^2\,dx\,dt
	& = s \iint_{Q} \Big( \theta\frac{a^{1/3}}{x^{2/3}}w^2
	\Big)^{3/4}\Big( \theta^3\frac{x^2}{a}w^2\Big)^{1/4}\,dx\,dt \\
	& \leqslant s \frac32 \iint_{Q} \theta\frac{a^{1/3}}{x^{2/3}}w^2\,dx\,dt
	+ \frac{s}{2} \iint_{Q}  \theta^3\frac{x^2}{a}w^2 \,dx\,dt\\
	& \leqslant  C \frac32 \iint_{Q} s \theta a w_x^2\,dx\,dt
	+ \frac{s}{2} \iint_{Q}  \theta^3\frac{x^2}{a}w^2 \,dx\,dt.
	\end{align*}  	
	Therefore, for $s$ large enough
		$\ds \int\!\!\!\!\!\int_{Q}(\sqrt{s}\frac{\dot{\theta}\psi}{\sqrt{\theta}}w)^2dx\,dt\leqslant  C \left(  \iint_{Q} s \theta a w_x^2\,dx\,dt
		+  \iint_{Q}  s^3 \theta^3\frac{x^2}{a}w^2 \,dx\,dt \right)$.
	Thus
	\begin{align}\label{equ:3.39}
	\int\!\!\!\!\!\int_{Q}\frac{1}{s\theta}(\mathcal{M}w)^2 \,dx\,dt&\leqslant C\left(\|L_s^+w\|^2+  \iint_{Q} s \theta a w_x^2\,dx\,dt+\int\!\!\!\!\!\int_{Q} s^3\theta^3\frac{x^2}{a} w^2 \,dx\,dt \right).
	\end{align}
	From inequalities \eqref{equ:3.29}, \eqref{equcor:3.39} and \eqref{equ:3.39},  one obtains
	\begin{equation}\label{equ:3.33}
	\begin{aligned}
	& \int\!\!\!\!\!\int_{Q}
	\Big(\frac{1}{s\theta}w_t^2+\frac{1}{s\theta}(\mathcal{M}w)^2+s^3 \theta^3\frac{x^2}{a(x)}w^2+s\theta a(x)w_x^2 \Big)\,dx\,dt  \\
	& \leqslant C\Big( \int\!\!\!\!\!\int_{Q} f^2 e^{2s\varphi}\,dx\,dt
	+ s a(1)\int_{0}^T \theta(t) w_x^2(t,1) dt\Big).
	\end{aligned}
	\end{equation}
	Consequently, we obtain the estimate \eqref{equ:3.28} which completes the proof.
\end{proof}
From the boundary Carleman estimate \eqref{equ:3.28}, we deduce  Carleman estimates for equation \eqref{problem} on the subregion $\omega'$.  Set $\omega'':=(x_1^{''},x_2^{''}) \subset\subset\omega'$ and
$\xi\in \mathcal{C}^\infty([0,1])$ such that $0\leq \xi(x)\leq 1$
for $x\in(0,1)$, $\xi(x)=1$ for $x\in (0,x_1^{''})$ and $\xi(x)=0$
for $x\in (x_2^{''}, 1)$. 

 State first the following intermediate Carleman estimate.

\begin{proposition}\label{estima0a}
		Let $T>0$, there
		exist two positive constants $C$ and $s_0$ such that, for every $u_0\in L^2(0,1)$,
		the solution $u$ of equation \eqref{problem}	satisfies
		\begin{align}\label{estimleft}
		& \int\!\!\!\!\!\int_{Q} \Big( \frac{1}{s\theta}\xi^2 u_t^2+ \frac{1}{s\theta}\xi^2(\mathcal{M}u)^2+ s^3 \theta^3
		\frac{x^2}{a}\xi^2u^2 + s\theta a\xi^2 u_x^2
		\Big)e^{2s\varphi}dt dx
		\notag \\
		&\leqslant C \left( \int\!\!\!\!\!\int_{Q} \xi^2 f^2e^{2s\varphi} dtdx  +
		\int\!\!\!\!\!\int_{Q_{\omega'}} s^2\Theta^2
		u^2 e^{2s\varphi} dtdx \right)
		\end{align}
		for all $s\geq s_0$.
	\end{proposition}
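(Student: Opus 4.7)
My plan is to deduce \eqref{estimleft} from the boundary Carleman estimate of Theorem~\ref{thm3.1} applied to the truncated function $v:=\xi u$, and then to convert the spurious local $u_x^2$ integral into a local $u^2$ integral via a weighted Caccioppoli inequality on the slightly larger region $\omega'$.

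The first step is clean: since $\xi$ depends only on $x$ and equals $1$ near $x=0$, the truncation $v$ inherits the degenerate boundary condition at $x=0$; and since both $\xi$ and $\xi_x$ vanish in a neighborhood of $x=1$, the surface term $s a(1)\int_0^T \theta v_x^2(t,1)e^{2s\varphi}dt$ in \eqref{equ:3.28} drops. A short computation shows that $v$ solves \eqref{problem} with new source
\[
F:=\xi f-2a\xi_x u_x-(a\xi_x)_x u,
\]
whose last two summands are supported in $\omega''\subset\subset\omega'$. Using the identities $\xi u_t=v_t$, $\xi u=v$, $\xi u_x=v_x-\xi_x u$, and $\xi\mathcal{M}u=\mathcal{M}v-2a\xi_x u_x-(a\xi_x)_x u$, each of the four $\xi^2$-weighted integrands on the left-hand side of \eqref{estimleft} is dominated by the corresponding $v$-integrand from \eqref{equ:3.28}, up to remainders supported in $\omega''$. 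After bounding $F^2$ by $3\xi^2 f^2$ plus similar localized remainders, Theorem~\ref{thm3.1} yields an intermediate inequality controlling the left-hand side of \eqref{estimleft} by
\[
C\iint_Q\xi^2 f^2 e^{2s\varphi}dxdt+C\iint_{Q_{\omega''}}(u^2+u_x^2)e^{2s\varphi}dxdt.
\]

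To finish, I would prove the weighted Caccioppoli estimate
\[
\iint_{Q_{\omega''}}u_x^2 e^{2s\varphi}dxdt\leqslant C\iint_{Q_{\omega'}}s^2\Theta^2 u^2 e^{2s\varphi}dxdt+C\iint_Q\xi^2 f^2 e^{2s\varphi}dxdt
\]
by the standard argument: pick a second cutoff $\eta\in\mathcal{C}^\infty_c(\omega')$ with $\eta\equiv 1$ on $\omega''$, test \eqref{problem} against $\eta^2 u e^{2s\varphi}$, integrate by parts in both variables (the $t$-boundary vanishes since $e^{2s\varphi}\to 0$ at $t=0,T$, and the $x$-boundary vanishes by the support of $\eta$), then dominate the cross terms by Young's inequality and absorb half of $\iint_Q\eta^2 a u_x^2 e^{2s\varphi}dxdt$ back on the left. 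The factor $s^2\Theta^2$ encodes the growth produced by $\varphi_t$ and $a\varphi_x^2$. Plugging this into the previous display, and noting that the residual local $u^2$ integral is already dominated by the target right-hand side, completes the proof.

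The main obstacle is this Caccioppoli step: one must identify a sharp polynomial weight $\Theta$ that is compatible with the iteration to higher-order equations in the next sections, by carefully tracking both the time derivative $|\dot\theta/\sqrt{\theta}|\leqslant C\theta^{3/2}$ (as already used in the proof of Theorem~\ref{thm3.1}) and the spatial factor $a\varphi_x^2=\lambda^2\theta^2 x^2/a$. Fortunately $\omega'\subset\subset(0,1)$ is bounded away from the degeneracy at $x=0$, so $a$ is bounded above and below on $\omega'$ and no singular coefficient enters the computation.
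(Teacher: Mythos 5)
Your argument is essentially the paper's own proof: truncate with $\xi$, apply Theorem~\ref{thm3.1} to $\xi u$ (the boundary term at $x=1$ vanishing since $\xi$ is supported away from $1$), recover the $u$-quantities from the $\xi u$-quantities via the product-rule identities, and absorb the resulting local $(u^2+u_x^2)$ remainder by a Caccioppoli inequality — the paper simply cites this last inequality from \cite[Lemma 6.1]{hjjaj} rather than re-deriving it as you propose. The only step you omit is the passage from smooth data to $u_0\in L^2(0,1)$: Theorem~\ref{thm3.1} is stated for $u_0\in H_a^1$, so one first proves \eqref{estimleft} for such data and then concludes by density, as the paper does.
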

	
\begin{proof}
	First, let $u_0\in H_a^1$. The function  $z:=\xi u$   satisfies the following equation
	\begin{equation}\label{equ:3.35}
	\begin{cases}
	z_t-(a(x) z_x)_x+\tilde{c}z= \xi f -\xi_x a(x) u_x-(a(x)\xi_xu)_x, & (t,x)\in Q,\\
	z(t,1)=0 \text{ and }\begin{cases}
	\ds z(t,0)=0,\,\,\text{ in case (WD)}  \\
	\ds  {(a(x) z_{ x})}(t,0)=0,\,\,\text{ in case (SD) }
	\end{cases}  &\text{ on }  (0,T),\\
	z(0,x)=\xi(x)u_0, & x\in (0,1).
	\end{cases}
	\end{equation}
	The Carleman estimate \eqref{equ:3.28} applied  to equation \eqref{equ:3.35} yields to
	\begin{equation} \label{equ:3.36}
	\begin{aligned}
	&\int\!\!\!\!\!\int_{Q} \Big(s\theta a(x)z_x^2+s^3 \theta^3
	\frac{x^2}{a(x)}z^2 +\frac{1}{s\theta}z_t^2+\frac{1}{s\theta}(\mathcal{M}z)^2 \Big)e^{2s\varphi}\,dx\,dt  \\
	& \qquad\qquad\leqslant C \int\!\!\!\!\!\int_{Q} \left( \xi^2 f^2 +(\xi_x a(x) u_x+(a(x)\xi_xu)_x)^2 \right)e^{2s\varphi}\,dx\,dt.
	\end{aligned}
	\end{equation}
From the definition of $\xi$ and the Cacciopoli inequality \cite[Lemma 6.1]{hjjaj}, we obtain
 \begin{align*}
 	 	\int\!\!\!\!\!\int_{Q} (\xi_x a(x) u_x+(a(x)\xi_xu)_x)^2 e^{2s\varphi}\,dx\,dt
 	 	&\leqslant C \int\!\!\!\!\!\int_{Q_{\omega''}}(u^2+u_x^2)e^{2s\varphi}\,dxdt\\
 	 	&\leqslant C \int\!\!\!\!\!\int_{Q_{\omega'}}(s^2\theta^2u^2+f^2)e^{2s\varphi}\,dxdt.\numberthis\label{equ:3.361}
 \end{align*}
Moreover, since  $\ds \xi u_x=z_x-\xi_xu$ and $\ds \xi\mathcal{M}u=\mathcal{M}z -(a\xi_x u)_x-\xi_x a u_x $,
 then we get 			
\begin{equation}\label{equ:3.362}
	\int\!\!\!\!\!\int_{Q}s\theta a\xi^2 u_x^2 e^{2s\varphi} dx dt \leqslant 2 \int\!\!\!\!\!\int_{Q}s\theta a z_x^2 e^{2s\varphi} dx dt+2 \int\!\!\!\!\!\int_{Q_{\omega'}}s^2\theta^2 u^2 e^{2s\varphi} dx dt
\end{equation}		
and
\begin{equation}\label{equ:3.363}
\int\!\!\!\!\!\int_{Q}\frac{1}{s\theta}\xi^2 \mathcal{M}u^2 e^{2s\varphi} dx dt \leqslant 2 \int\!\!\!\!\!\int_{Q}\frac{1}{s\theta} \mathcal{M} z^2 e^{2s\varphi} dx dt+C \int\!\!\!\!\!\int_{Q_{\omega'}}(s^2\theta^2u^2+f^2)e^{2s\varphi}\,dxdt.
\end{equation}	
Thus, from \eqref{equ:3.36}-\eqref{equ:3.363} and the definition of $\xi$ we deduce the desired estimate for $u_0\in H^1_a$.  Finally, by density, we conclude for  $u_0\in L^2$.			
 \end{proof}
Using the previous Carleman estimate \eqref{estimleft}, by the same argument of \cite[Proposition 2.4]{FadiliManiar}, we obtain the following general version.
\begin{proposition}\label{propo3.6}
		Let $T>0$ and  $\tau\in \mathbb{R}$. Then there exists two positive constants $C$ and $s_0$ such that, for all $\ds u_0\in L^2(0,1)$, the solution $u$ of equation \eqref{problem} satisfies
		\small{\begin{multline}\label{eqproposition23}
			\int\!\!\!\!\!\int_{Q} \left(s^{\tau-1}\theta^{\tau-1}\xi^2 u_t^2+s^{\tau-1}\theta^{\tau-1}\xi^2 (\mathcal{M}u)^2 +s^{1+\tau}\theta^{1+\tau} a \xi^2 u_x^2+s^{3+\tau} {\theta}^{3+\tau}\frac{x^2}{a}\xi^2  u^2\right)e^{2s\varphi(t,x)}dxdt \\
			\leqslant C\left(\int\!\!\!\!\!\int_{Q} \xi^2 s^\tau\theta^{\tau}f^2(t,x)e^{2s\varphi(t,x)}dx dt+\int\!\!\!\!\!\int_{Q_{\omega'}} s^{2+\tau}\theta^{2+\tau} u^2 e^{2s\varphi(t,x)}dxdt  \right)
			\end{multline}}
		for all $s\geqslant s_0$.
	\end{proposition}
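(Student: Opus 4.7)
The plan is to deduce \eqref{eqproposition23} from the already-established boundary Carleman estimate \eqref{estimleft} by a scaling change of variable that shifts all the $s\theta$-weights by a factor of $s^{\tau}\theta^{\tau}$. Specifically, I would set
\[
v(t,x):=s^{\tau/2}\theta(t)^{\tau/2}\,u(t,x);
\]
since $\theta$ depends only on $t$, $v$ satisfies the same boundary conditions as $u$ and solves
\[
v_t-(a(x)v_x)_x+\tilde c\,v \;=\; s^{\tau/2}\theta^{\tau/2}f\;+\;\tfrac{\tau}{2}s^{\tau/2}\theta^{\tau/2-1}\dot\theta\,u.
\]
Applying \eqref{estimleft} to $v$ and using $v^{2}=s^{\tau}\theta^{\tau}u^{2}$, $v_x^{2}=s^{\tau}\theta^{\tau}u_x^{2}$ and $(\mathcal M v)^{2}=s^{\tau}\theta^{\tau}(\mathcal M u)^{2}$ (a consequence of $\partial_x\theta=0$), the $u_x^{2}$, $u^{2}$ and $(\mathcal M u)^{2}$-terms on the left of \eqref{eqproposition23} appear with exactly the right powers of $s\theta$, while on the right one gets the desired $s^{\tau}\theta^{\tau}\xi^{2}f^{2}$ and the local term $s^{\tau+2}\theta^{\tau+2}u^{2}$, up to a parasitic contribution of order $s^{\tau}\theta^{\tau-2}\dot\theta^{2}\xi^{2}u^{2}$ coming from the squaring of the new source.

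The $u_t$-term requires an additional manipulation. Writing $u_t=s^{-\tau/2}\theta^{-\tau/2}v_t-\tfrac{\tau}{2}\theta^{-1}\dot\theta\,u$ gives
\[
s^{\tau-1}\theta^{\tau-1}\xi^{2}u_t^{2} \;\leqslant\; \tfrac{2}{s\theta}\xi^{2}v_t^{2} \;+\; C\,s^{\tau-1}\theta^{\tau-3}\dot\theta^{2}\xi^{2}u^{2},
\]
so yet another error appears. Invoking the bound $|\dot\theta\,\theta^{-1}|\leqslant 4T\theta^{1/4}$ already used in the proof of Theorem~\ref{thm3.1}, every $\dot\theta^{2}$ costs at most $C\theta^{5/2}$, so both error terms are controlled by $Cs^{\tau}\theta^{\tau+1/2}\xi^{2}u^{2}$.

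Finally, to absorb these parasitic $\xi^{2}u^{2}$-terms I would apply the Hardy--Poincaré inequality \eqref{HarPoin} to $\xi u$---which lies in $H_a^{1}$ since $\xi$ is supported away from $x=1$ and is constant near $x=0$, so the boundary conditions (both the (WD) Dirichlet and the (SD) Neumann-type one at $0$) are preserved---to get
\[
\int_{0}^{1}\xi^{2}u^{2}\,dx \;\leqslant\; \tfrac{1}{a(1)}\int_{0}^{1}\tfrac{a(x)}{x^{2}}(\xi u)^{2}\,dx \;\leqslant\; C\!\int_{0}^{1} a\,\xi^{2}u_x^{2}\,dx \;+\; C\!\int_{\omega'}u^{2}\,dx.
\]
Since $s^{\tau}\theta^{\tau+1/2}/(s^{\tau+1}\theta^{\tau+1})=1/(s\theta^{1/2})$ and $s^{\tau}\theta^{\tau+1/2}/(s^{\tau+2}\theta^{\tau+2})=1/(s^{2}\theta^{3/2})$ are both small for $s\geqslant s_{0}$ large enough, the first piece is absorbed by the leading $s^{\tau+1}\theta^{\tau+1}a\xi^{2}u_x^{2}$ on the left and the second by the local $s^{\tau+2}\theta^{\tau+2}u^{2}$ term on the right, which gives \eqref{eqproposition23}. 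The main obstacle is precisely this absorption step: because $x^{2}/a$ degenerates at $x=0$, the parasitic $\xi^{2}u^{2}$-terms cannot be compared pointwise to the leading $s^{\tau+3}\theta^{\tau+3}\tfrac{x^{2}}{a}\xi^{2}u^{2}$ on the left, so the passage through Hardy--Poincaré---with a careful book-keeping of $\xi$'s support so that $\xi u$ still satisfies the (SD) boundary condition at $0$---is what really drives the argument.
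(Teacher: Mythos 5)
Your overall strategy --- apply the $\tau=0$ estimate \eqref{estimleft} to $v=(s\theta)^{\tau/2}u$, which solves the same equation with the extra source $\tfrac{\tau}{2}s^{\tau/2}\theta^{\tau/2-1}\dot\theta\,u$, and then absorb the parasitic terms using $|\dot\theta|\leqslant C\theta^{5/4}$ --- is exactly the standard scaling argument behind the result the paper invokes (\cite[Proposition 2.4]{FadiliManiar}), so you are on the intended path. But the absorption step, which you yourself single out as the crux, does not work as written. The parasitic terms carry the weight $e^{2s\varphi}$, which depends on $x$ through $\psi$, whereas you apply the Hardy--Poincar\'e inequality \eqref{HarPoin} to $\xi u$ in \emph{unweighted} $L^2(0,1)$; the resulting bound $\int\xi^2u^2\leqslant C\int a\xi^2u_x^2+C\int_{\omega'}u^2$ cannot be multiplied back by $e^{2s\varphi}$ (bounding $e^{2s\varphi}$ by its maximum in $x$ goes the wrong way), so the chain of inequalities does not close. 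The correct procedure is the one already used in the proof of Theorem \ref{thm3.1} when deriving \eqref{equ:3.32}: apply Hardy--Poincar\'e to the weighted function $\xi w$ with $w=e^{s\varphi}u$, combined with a Young splitting of $s^{\tau}\theta^{\tau+1/2}\xi^2w^2$ into a $\frac{a^{1/3}}{x^{2/3}}$-piece and a $\frac{x^2}{a}$-piece carrying a small parameter. This produces, besides $s^{\tau+1}\theta^{\tau+1}a\xi^2u_x^2e^{2s\varphi}$ and a local $\omega'$ term, a commutator term $s^{\tau+3}\theta^{\tau+3}\frac{x^2}{a}\xi^2u^2e^{2s\varphi}$ coming from $w_x=e^{s\varphi}(u_x+s\theta\lambda\frac{x}{a}u)$; all of these must appear with small constants (via the $\varepsilon$ in Young) to be legitimately absorbed by the left-hand side for $s$ large. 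Your version skips this and compares weights as if the Hardy--Poincar\'e constant acted pointwise.

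A second, smaller gap: Proposition \ref{estima0a} is stated for solutions of \eqref{problem} with $u_0\in L^2(0,1)$ and source in $L^2(Q)$, whereas for $\tau>0$ your $v=(s\theta)^{\tau/2}u$ blows up as $t\to0^+,T^-$ and its source $s^{\tau/2}\theta^{\tau/2}f+\tfrac{\tau}{2}s^{\tau/2}\theta^{\tau/2-1}\dot\theta u$ need not lie in $L^2(Q)$, so \eqref{estimleft} cannot be invoked verbatim. This is repairable --- the proof of \eqref{estimleft} only uses the equation, the boundary conditions and the vanishing of $e^{s\varphi}v$ at $t=0,T$, which still holds because the exponential dominates any power of $\theta$; alternatively one can truncate in time and pass to the limit --- but it must be said, and, as in Proposition \ref{estima0a} itself, one should first argue for $u_0\in H^1_a$ and conclude for $u_0\in L^2(0,1)$ by density.
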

Proposition \ref{estima0a} gave a Carleman estimate in
$(0,x_1^{'})$. For the interval $(x_1^{'},1)$, similarly as in \cite{A. HAJJAJ},   \cite[Lemma 1.2]{Fursikov} remains true when we replace $\theta=\frac{1}{t(T-t)}$ by $\theta=\frac{1}{t^4(T-t)^4}$. Thus,  we have the following lemma.  
Now, with the non generate Carleman estimate of \cite[Lemma 1.2]{Fursikov}, we are able to give a Carleman estimate to equation \eqref{problem} on the interval $(x_1',1)$.

\begin{proposition}\label{estimaa1}
	There exist two positive constants $C$ and $s_0$ such that for every $u_0\in L^2(0,1)$,  the solution $u$ of equation \eqref{problem} satisfies
	 \begin{align*}\label{estimright}
		& \int\!\!\!\!\!\int_{Q} \Big( \frac{1}{s\theta}\zeta^2 u_t+\frac{1}{s\theta}\zeta^2(\mathcal{M}u)^2+ s^3 \theta^3
		\frac{x^2}{a}\zeta^2 u^2 + s\theta a \zeta^2 u_x^2
		\Big)e^{2s\Phi}dt dx
		\notag \\
		&\qquad\qquad\leqslant C \Big( \int\!\!\!\!\!\int_{Q} \zeta^2
		f^2e^{2s\Phi} dtdx + \int\!\!\!\!\!\int_{Q_{\omega'}} \!\! s^3\theta^3 u^2 e^{2s\Phi} dtdx \Big)
		\end{align*}
	for all $s\geq s_0$, where $\zeta:=1-\xi$ and $\varPhi=\theta\varPsi, \; \,
	\varPsi(x)=\big(e^{\rho\sigma(x)}-e^{2\rho{\|\sigma\|}_{\infty}}\big),$
	with  $\sigma$  a $\mathcal{C}^2([0,1])$ function such that $\sigma(x)>0$ in $(0,1)$, $\sigma(0)=\sigma(1)=0$ and $\sigma_x(x)\neq 0$ in $[0,1]\setminus \omega_0$, $\omega_0$ is an open subset of $\omega$.
\end{proposition}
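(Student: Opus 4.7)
The plan is to mirror the strategy used for Proposition \ref{estima0a}, but now working in the non-degenerate subregion $(x_1',1)$ with the weight $\Phi = \theta\Psi$ in place of $\varphi$. First I would assume $u_0\in H_a^1$ (the $L^2$ case following by a standard density argument at the end), set $v := \zeta u$, and derive the equation satisfied by $v$:
\begin{equation*}
\begin{cases}
v_t - (a(x) v_x)_x + \tilde c v = \zeta f - \zeta_x a(x) u_x - (a(x)\zeta_x u)_x, & (t,x)\in Q,\\
v(t,0)=v(t,1)=0, & t\in(0,T),\\
v(0,x)=\zeta(x) u_0(x), & x\in(0,1).
\end{cases}
\end{equation*}
Note that the boundary condition at $0$ is automatic since $\zeta\equiv 0$ on $(0,x_1')$, so $v$ is supported away from the degeneracy and is effectively a function on $(x_1',1)$ vanishing at both endpoints. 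The coefficient $a$ is bounded and uniformly positive on $\supp(\zeta)$, hence the equation is genuinely non-degenerate there.

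Next I would invoke the classical Fursikov--Imanuvilov Carleman estimate from \cite[Lemma 1.2]{Fursikov}, in the form adapted to the weight $\theta(t)=1/(t^4(T-t)^4)$ as indicated in the paragraph just preceding the statement. Applied to $v$, this yields
\begin{equation*}
\iint_Q \Big(\tfrac{1}{s\theta} v_t^2 + \tfrac{1}{s\theta}(\mathcal{M}v)^2 + s\theta a v_x^2 + s^3\theta^3\tfrac{x^2}{a} v^2\Big)e^{2s\Phi}\,dxdt \leqslant C \iint_Q (\text{RHS of $v$-equation})^2 e^{2s\Phi}\,dxdt + C\iint_{Q_{\omega_0}} s^3\theta^3 v^2 e^{2s\Phi}\,dxdt,
\end{equation*}
where the localised term appears on a small open subset $\omega_0\subset\omega$ on which $\sigma_x$ may vanish. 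The crucial observation is that $\zeta_x$ is supported in the overlap interval $(x_1'',x_2'')\subset\subset\omega'$, so the problematic source terms $\zeta_x a u_x$ and $(a\zeta_x u)_x$ are localised in $\omega'$, where $a$ is bounded and positive.

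I would then absorb those source terms using the Caccioppoli inequality \cite[Lemma 6.1]{hjjaj} exactly as in the proof of Proposition \ref{estima0a}, to obtain
\begin{equation*}
\iint_Q \big(\zeta_x a u_x + (a\zeta_x u)_x\big)^2 e^{2s\Phi}\,dxdt \leqslant C \iint_{Q_{\omega'}}\big(s^2\theta^2 u^2 + f^2\big)e^{2s\Phi}\,dxdt,
\end{equation*}
and bundle $\iint_{Q_{\omega_0}} s^3\theta^3 v^2 e^{2s\Phi}$ into the $s^3\theta^3 u^2$ integral over $Q_{\omega'}$. Finally, to convert the $v$-estimate back into a $u$-estimate I would use the algebraic identities $\zeta u_x = v_x - \zeta_x u$ and $\zeta\,\mathcal{M}u = \mathcal{M}v - (a\zeta_x u)_x - \zeta_x a u_x$, plus the fact that $\zeta u_t = v_t$, combined once more with Caccioppoli to control the cross terms by $s^3\theta^3 u^2 + f^2$ on $\omega'$. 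Gathering everything yields the announced estimate, and a density argument extends it from $u_0\in H_a^1$ to $u_0\in L^2(0,1)$.

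The main technical obstacle I anticipate is verifying that \cite[Lemma 1.2]{Fursikov} genuinely goes through with the modified weight $\theta(t)=1/(t^4(T-t)^4)$ instead of the usual $1/(t(T-t))$; the author alludes to this but one has to re-check that $|\dot\theta|\leqslant C\theta^{5/4}$ and the other standard pointwise bounds on derivatives of $\theta$ are still compatible with the absorption arguments in the Fursikov--Imanuvilov proof. Everything else is a direct localisation plus Caccioppoli exercise analogous to the degenerate case already handled.
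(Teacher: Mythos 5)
Your proposal is correct and follows essentially the same route as the paper: localise with $Z=\zeta u$, apply the non-degenerate Fursikov--Imanuvilov estimate with the modified weight $\theta=1/(t^4(T-t)^4)$, absorb the commutator terms $\zeta_x a u_x+(a\zeta_x u)_x$ via Caccioppoli on $\omega''$, and convert back to $u$. The only (cosmetic) difference is that you recover $\frac{1}{s\theta}\zeta^2(\mathcal{M}u)^2$ directly from the identity $\zeta\mathcal{M}u=\mathcal{M}Z-\zeta_x a u_x-(a\zeta_x u)_x$, whereas the paper passes through $\zeta u_{xx}=Z_{xx}-\zeta_{xx}u-2\zeta_x u_x$ and then uses $\mathcal{M}u=a'u_x+au_{xx}$ on $(x_1',1]$; both are equivalent since $a$ is non-degenerate on $\supp\zeta$, and your caveat about re-checking the Fursikov--Imanuvilov lemma for the modified $\theta$ is exactly the point the paper itself only asserts.
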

\begin{proof}
	Not only the function $Z:=\zeta u$ has its support in $[0,T]\times(x_1',1)$, but it is also a solution of the uniformly parabolic equation
	\begin{equation}\label{equ:3.40}
	\begin{cases}
	Z_t-(a(x) Z_x)_x+\tilde{c}Z= \zeta f -\zeta_x a(x) u_x-(a(x)\zeta_x u)_x, & (t,x)\in Q,\\
	Z(t,1)=0 \text{ and }\begin{cases}
	\ds Z(t,0)=0,\,\,\text{ in case (WD)}  \\
	\ds  {(a(x) Z_{ x})}(t,0)=0,\,\,\text{ in case (SD) }
	\end{cases} & \text{ on }  (0,T),\\
	Z(0,x)=\zeta(x)u_0, & x\in (0,1).
	\end{cases}
	\end{equation}
	 Hence, by inequality \cite[Lemma 1.2]{Fursikov},   we have
	\begin{align*}
	\int\!\!\!\!\!\int_{Q}&\Big( \frac{1}{s\theta} \left( Z_t^2+( Z_{x x})^2\right)+s\theta a Z_x^2+s^3\theta^3\frac{x^2}{a} Z^2 \Big) e^{2s\varPhi}\, dxdt\\
	&\leqslant C \Big(\int\!\!\!\!\!\int_{Q} \left(\zeta^2 f^2+
	\big(\zeta_x  a(x)
	u_x+(a(x) \zeta_x u)_x\big)^2\right) e^{2s\varPhi}\, dxdt+\int\!\!\!\!\!\int_{\omega\times(0,T)}s^3\theta^3 Z^2  e^{2s\varPhi}\, dxdt\Big).
	\end{align*}
	Again, from the definition of $\zeta$ and the Cacciopoli inequality \cite[Lemma 6.1]{hjjaj}, we obtain
\small{	$$
	 \int\!\!\!\!\!\int_{Q} \big(\zeta_x  a(x)
	u_x+(a(x) \zeta_x u)_x\big)^2 e^{2s\varPhi}\, dxdt \leqslant  C \int\!\!\!\!\!\int_{Q_{\omega''}}(u^2+u_x^2) e^{2s\Phi} dtdx
		\leqslant C  \int\!\!\!\!\!\int_{Q_{\omega'}} \big(s^2\theta^2 u^2+f^2\big)e^{2s\Phi} dtdx.
	$$ }
	Thus
	\begin{align*}
	\int\!\!\!\!\!\int_{Q}\left( \frac{1}{s\theta} \left( Z_t^2+ Z_{x x}^2\right)\right.&+ \left. s\theta a Z_x^2+s^3\theta^3\frac{x^2}{a} Z^2 \right) e^{2s\varPhi}\, dxdt \\
	&\leqslant C \Big(\int\!\!\!\!\!\int_{Q} \zeta^2 f^2 e^{2s\varPhi}\, dxdt+\int\!\!\!\!\!\int_{\omega\times(0,T)}s^3\theta^3 u^2  e^{2s\varPhi}\, dxdt\Big).\numberthis\label{pro3.82}
	\end{align*} 	
From $\zeta u_x=Z_x-\zeta_x u$ and $\supp \zeta_x\Subset \omega''$, we deduce
\begin{align*}
 \ds \int\!\!\!\!\!\int_{Q} s\theta a \zeta^2 u_x^2  e^{2s\varPhi} dx dt& \leqslant  C\Big(\int\!\!\!\!\!\int_{Q} s\theta a Z_x^2  e^{2s\varPhi} dx dt+\int\!\!\!\!\!\int_{Q_{\omega''}} s\theta u^2  e^{2s\varPhi} dx dt\Big)	\\
 &\leqslant  C\Big(\int\!\!\!\!\!\int_{Q} s\theta a Z_x^2  e^{2s\varPhi} dx dt+\int\!\!\!\!\!\int_{Q_{\omega'}} s^3\theta^3 u^2  e^{2s\varPhi} dx dt\Big)\numberthis\label{pro3.83}
\end{align*}
for $s$ large enough. Similarly since $\ds \zeta u_{x x}=Z_{x x}-\zeta_{x x} u-2\zeta_x u_x$ and thanks to Cacciopoli inequality, we get
 \begin{align*}
 \ds \int\!\!\!\!\!\int_{Q}\frac{1}{ s\theta}  \zeta^2 u_{x x}^2  e^{2s\varPhi} dx dt& \leqslant  C\Big(\int\!\!\!\!\!\int_{Q} \frac{1}{ s\theta} Z_{x x}^2  e^{2s\varPhi} dx dt+ \int\!\!\!\!\!\int_{Q_{\omega''}} (\zeta_{x x} u+2\zeta_x u_x)^2  e^{2s\varPhi} dx dt\Big)	\\
 &\leqslant C\Big(\int\!\!\!\!\!\int_{Q} \frac{1}{ s\theta} Z_{x x}^2  e^{2s\varPhi} dx dt+\int\!\!\!\!\!\int_{Q_{\omega'}} \left(s^2\theta^2 u^2+f^2\right)e^{2s\Phi} dtdx\Big).\numberthis\label{pro3.84}
 \end{align*}
 The estimates \eqref{pro3.82}-\eqref{pro3.84} lead to
 \begin{align*}
 	 	\int\!\!\!\!\!\int_{Q}\left( \frac{1}{s\theta} \zeta^2u_t^2+\frac{1}{s\theta}\zeta^2( u_{x x})^2 \right.&+ \left.s\theta a \zeta^2 u_x^2+s^3\theta^3\frac{x^2}{a}\zeta^2 u^2 \right) e^{2s\varPhi}\, dxdt\\
 	 	&\leqslant C \Big(\int\!\!\!\!\!\int_{Q} \zeta^2 f^2 e^{2s\varPhi}\, dxdt+\int\!\!\!\!\!\int_{\omega\times(0,T)}s^3\theta^3 u^2  e^{2s\varPhi}\, dxdt\Big).\numberthis\label{pro3.85}
 \end{align*}	
Since $a$ is continuous on $(x_1',1]$ and by using  $\ds \mathcal{M}u= a'u_x+au_{x x}$ we obtain
\begin{align*}
	\int\!\!\!\!\!\int_{Q}\frac{1}{s\theta}\zeta^2 (\mathcal{M}u)^2 e^{2s\Phi} dtdx&\leqslant   C\Big(\int\!\!\!\!\!\int_{Q} \frac{1}{s\theta}\zeta^2u_{x x}^2\,dxdt+\int\!\!\!\!\!\int_{Q} s\theta a \zeta^2 u_x^2\,dxdt\Big).\numberthis\label{pro3.86}
\end{align*}	
	Thus, combining \eqref{pro3.85} and \eqref{pro3.86} we deduce the desired estimate.
\end{proof}
Again,  Proposition \ref{estimaa1} can be generalized as follows.
\begin{proposition}\label{proposition26}
		Let $T>0$ and $\tau\in\R$. Then, there exist two
		positive constants $C$ and $s_0$ such that for every $u_0\in L^2(0,1)$, the solution $u$ of equation  \eqref{problem} satisfies
		\begin{multline}\label{eqproposition26}
		\int\!\!\!\!\!\int_{Q} \Big(s^{\tau-1}\theta^{\tau-1}\zeta^2 u_t^2+s^{\tau-1}\theta^{\tau-1}\zeta^2 (\mathcal{M}u)^2+s^{1+\tau}\theta^{1+\tau} a\zeta^2 u_x^2+s^{3+\tau} {\theta}^{3+\tau}\frac{x^2}{a}\zeta^2 u^2 \Big)e^{2s\Phi}dx dt \\
		\leq C\Big(\int\!\!\!\!\!\int_{Q}  \zeta^2 s^\tau \theta^{\tau}f^2(t,x)e^{2s\Phi}dx dt+\int\!\!\!\!\!\int_{Q_{\omega'}} s^{3+\tau}\theta^{3+\tau} u^2 e^{2s\Phi}  dx dt\Big)
		\end{multline}
		for all $s\geqslant s_0$, 	with $\zeta=1-\xi$.
	\end{proposition}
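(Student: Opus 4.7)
The plan is to reduce the statement to the already established $\tau=0$ case (Proposition \ref{estimaa1}) by a multiplicative change of unknown, in the spirit of \cite[Proposition 2.4]{FadiliManiar}. I set $v:=(s\theta)^{\tau/2}u$ and note that, since $\theta$ depends only on $t$, a short computation shows that $v$ solves the same parabolic equation as $u$ but with source
\[
\tilde f=(s\theta)^{\tau/2}f+\tfrac{\tau}{2}(s\theta)^{\tau/2}\tfrac{\dot\theta}{\theta}u,
\]
and with the same boundary conditions on $\Sigma$. The operator $\mathcal{M}$ commutes with multiplication by $\theta^{\tau/2}$, so $v_x=(s\theta)^{\tau/2}u_x$ and $\mathcal{M}v=(s\theta)^{\tau/2}\mathcal{M}u$; only the time derivative generates a lower-order term proportional to $\dot\theta/\theta$.

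Applying Proposition \ref{estimaa1} to $v$ with source $\tilde f$ and substituting back the identities $v^2=s^\tau\theta^\tau u^2$, $v_x^2=s^\tau\theta^\tau u_x^2$ and $(\mathcal{M}v)^2=s^\tau\theta^\tau(\mathcal{M}u)^2$ converts the four principal terms on the left-hand side into precisely those required in \eqref{eqproposition26}, and turns the observation term $\iint_{Q_{\omega'}}s^3\theta^3v^2e^{2s\Phi}\,dxdt$ into the desired $\iint_{Q_{\omega'}}s^{3+\tau}\theta^{3+\tau}u^2e^{2s\Phi}\,dxdt$. Expanding the square of $v_t$ gives
\[
s^{\tau-1}\theta^{\tau-1}u_t^2\leqslant\tfrac{2}{s\theta}v_t^2+Cs^{\tau-1}\theta^{\tau-1}\Bigl(\tfrac{\dot\theta}{\theta}\Bigr)^2u^2,
\]
while $\tilde f^2\leqslant 2(s\theta)^\tau f^2+C(s\theta)^\tau(\dot\theta/\theta)^2u^2$; together these introduce a single error term of the form $C\iint_Q s^{\tau-1}\theta^{\tau-1}(\dot\theta/\theta)^2\zeta^2u^2e^{2s\Phi}\,dxdt$ on the right.

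The remaining step is to absorb this error into the principal left-hand side term $\iint_Q s^{3+\tau}\theta^{3+\tau}\frac{x^2}{a}\zeta^2u^2e^{2s\Phi}\,dxdt$. Using the bound $|\dot\theta/\theta|\leqslant C\theta^{1/4}$ already exploited in the proof of Theorem \ref{thm3.1}, the error is dominated by $Cs^{\tau-1}\theta^{\tau-1/2}\zeta^2u^2$. Since $a$ is continuous and strictly positive on $\supp\zeta\subset[x_1',1]$, the quotient $a/x^2$ is bounded there, and since $\theta$ is bounded below on $(0,T)$, one obtains the pointwise inequality $s^{\tau-1}\theta^{\tau-1/2}\leqslant Cs^{-4}\,s^{3+\tau}\theta^{3+\tau}\frac{x^2}{a}$ on $\supp\zeta$. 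Choosing $s_0$ sufficiently large then yields \eqref{eqproposition26}. The main obstacle is the careful bookkeeping of the $\dot\theta/\theta$ cross terms produced by the substitution, together with the verification that each of them is absorbable away from the degeneracy point $x=0$; this is precisely why the statement is formulated with the cutoff $\zeta$ supported in $[x_1',1]$ rather than $\xi$.
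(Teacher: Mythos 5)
Correct, and essentially the paper's intended route: the paper gives no written proof here, saying only that Proposition \ref{estimaa1} ``can be generalized'' by the argument of \cite[Proposition 2.4]{FadiliManiar}, which is precisely your substitution $v=(s\theta)^{\tau/2}u$, application of the $\tau=0$ estimate to $v$, and absorption of the $\dot\theta/\theta$ error terms on $\supp\zeta$, where $x^2/a$ and $\theta$ are bounded below. One bookkeeping remark: the dominant error comes from the $\tilde f^2$ term and carries the weight $(s\theta)^{\tau}(\dot\theta/\theta)^2\leqslant C\,s^{\tau}\theta^{\tau+1/2}$, not $s^{\tau-1}\theta^{\tau-1}(\dot\theta/\theta)^2$ as you wrote, but it is still dominated pointwise on $\supp\zeta$ by $C s^{-3}\,s^{3+\tau}\theta^{3+\tau}\frac{x^2}{a}$ and hence absorbed for $s$ large, so your conclusion stands unchanged.
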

	
Now we examine the case of a scalar degenerate parabolic equation ??.
\subsection{Carleman estimate for a scalar degenerate parabolic equation}\noindent\newline
In this section we will consider $z$, with the monomial derivative
  $\mathcal{M}^i\partial_t^j z \in L^2(0,T;H_a^2(0,1))$  for every $i,j\in\N$, a solution of the following scalar degenerate parabolic equation of order $2n$ in space.
\begin{equation}\label{scalar:parabolic}
\begin{cases}
P(\partial_t,\mathcal{M})z=0 & \text{in } Q,\\
\mathbf{C}\mathcal{M}^k z=0 & \text{on } \Sigma, \forall k\geqslant 0,
\end{cases}
\end{equation}
where $P(\partial_t,\mathcal{M})$ is the operator defined  by $\ds P(\partial_t,\mathcal{M})= det (\partial_t I_d+\mathbf{D}^* \mathcal{M}+A^*).$ Since the matrix $\mathbf{D}$ is diagonalizable \eqref{D:diagonalizable}, one gets
\begin{align*}
	P(\partial_t,\mathcal{M})&= det (\partial_t I_d+\mathbf{D}^* \mathcal{M}+A^*)\\
	                    &= det (\partial_t I_d+ P^* \mathbf{J}{P^*}^{-1} \mathcal{M}+A^*)\\
	                    &= det (\partial_t I_d+ \mathbf{J} \mathcal{M}+ {P^*}^{-1}A^* P^*)\\
	                    &=P_n\cdots P_1+\sum\limits_{p=2}^{n-1}\sum\limits_{1\leqslant i_1<\cdots<i_p\leqslant n} \alpha_{i_1,\cdots,i_p}P_{i_1}\cdots P_{i_p}+\sum\limits_{i=1}^{n}\alpha_i P_i +\alpha ,
\end{align*}
where $\ds P_i\equiv \partial_t+d_i \mathcal{M},\,\,1\leqslant i\leqslant  n,\,\,\, d_i>1$ and $\alpha_{i_1,\cdots,i_p},\,\alpha_i,\,\alpha\in\R$ depend only on the matrices $\mathbf{D}$ and $A$. The main result in this subsection is the following.
 \begin{theorem}\label{theorem:3.2}
	Let us fix $k_1$, $k_2\in \N$ and $\tau_0\in\R$. Then, there exist two positive constants $C_0$ and $s_0$ (only depending in  $\omega$, $n$, $a$, $\mathbf{D}$, $A$, $\tau_0$, $k_1$ and $k_2$) and $r=r(n)\in \N$ such the following inequality
	\begin{equation}
		\sum\limits_{i=0}^{k_1}\sum\limits_{j=0}^{k_2}\mathcal{J}(\tau_0-4(i+j),\mathcal{M}^i\partial_t^j\phi)\leqslant C_0 \iint_{Q_{\omega}}(s\theta)^{\tau_0+r}e^{2s\varPhi}{|\phi|}^2
	\end{equation}
holds for all $s\geqslant s_0$ and for every solution $\phi$ of equation \eqref{scalar:parabolic} that satisfies $\ds \mathcal{M}^i\partial_t^j\phi \in L^2(0,T,H^2_a(0,1))$ for every $i,j\in\N$. The terms $\ds \mathcal{J}(\tau,\phi)$ and $\ds I(\tau,z)$  are given by
\begin{equation}
\left\{
\begin{array}{lll}
\ds\mathcal{J}(\tau,\phi)=I(\tau+3(n-1),\phi)+\sum\limits_{i=2}^{n}I(\tau+3(n-2),P_{i}\phi)\\
\ds\qquad\qquad\qquad+\sum\limits_{p=2}^{n-1}\sum\limits_{1\leqslant i_1<\cdots<i_p\leqslant n}I(\tau+3(n-p-1),P_{i_p}\cdots P_{i_1}\phi),\\
\ds I(\tau,z)=\int\!\!\!\!\!\int_{Q}\Big(s^{\tau-1}\theta^{\tau-1} z_t^2 e^{2s\varphi}+s^{\tau-1}\theta^{\tau-1} (\mathcal{M}z)^2 e^{2s\varphi} \\ \ds\qquad\qquad\qquad+s^{\tau+1}\theta^{\tau+1}e^{2s\varphi}a(x)z_x^2
+	s^{\tau+3}\theta^{\tau+3}e^{2s\varphi}\frac{x^2}{ a(x)}z^2 \Big) dx dt.
\end{array}
\right.
\end{equation}
\end{theorem}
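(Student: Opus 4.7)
The strategy is to exploit the factorization
\[
P(\partial_t,\mathcal{M})=P_n\cdots P_1+\sum_{p=2}^{n-1}\sum_{1\le i_1<\cdots<i_p\le n}\alpha_{i_1,\dots,i_p}P_{i_1}\cdots P_{i_p}+\sum_{i=1}^{n}\alpha_i P_i+\alpha,\qquad P_i=\partial_t+d_i\mathcal{M},
\]
into commuting first-order factors, and to chain the scalar Carleman estimates of Propositions~\ref{propo3.6} and~\ref{proposition26} along the factorization. Setting $z_0:=\phi$ and $z_k:=P_k z_{k-1}=P_k\cdots P_1\phi$ for $k=1,\dots,n$, and using that $\mathbf{C}\mathcal{M}^\ell\phi=0$ for every $\ell\ge 0$, each $z_{k-1}$ solves a single-equation degenerate problem of the type \eqref{problem} with source $z_k$ and diffusion $d_k$; after a time reversal the Carleman machinery of Section~3.1 applies verbatim ($d_k$ enters only as an inessential constant).

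The first technical step is to combine the local estimates \eqref{eqproposition23} and \eqref{eqproposition26} via the partition $1=\xi+\zeta$. After comparing $\varphi$ and $\Phi$ on the overlap region where both weights are bounded, this produces a global single-factor estimate of the form
\[
I(\tau,w)\le C\Bigl(\iint_Q s^\tau\theta^\tau g^2 e^{2s\varphi}\,dx\,dt+\iint_{Q_{\omega'}}(s\theta)^{\tau+3}\,w^2 e^{2s\Phi}\,dx\,dt\Bigr)
\]
for every $\tau\in\R$, every $s$ large, and every solution of $w_t+d\mathcal{M}w=g$ with the correct boundary conditions. Applying this to $w=z_{k-1}$, $g=z_k$ with $\tau=\tau_0+3(n-k)$, and noting that the resulting $g^2$-integral is absorbed by the $\tfrac{x^2}{a}$-term of $I(\tau_0+3(n-1-k),z_k)$ up to bounded factors, an iteration over $k=1,\dots,n$ yields
\[
\sum_{k=0}^{n-1}I(\tau_0+3(n-1-k),z_k)\le C\Bigl[I(\tau_0,z_n)+\sum_{k=0}^{n-1}\iint_{Q_{\omega'}}(s\theta)^{\tau_0+3(n-k)}z_k^2 e^{2s\Phi}\,dx\,dt\Bigr].
\]

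The loop is closed using $P(\partial_t,\mathcal{M})\phi=0$: the top term $z_n=P_n\cdots P_1\phi$ equals minus a linear combination of the lower products $P_{i_1}\cdots P_{i_p}\phi$ ($p\le n-1$) and of $\phi$, all of which appear on the LHS $\mathcal{J}(\tau_0,\phi)$ with strictly larger $s\theta$-powers, so $I(\tau_0,z_n)$ is absorbed for $s$ large. It remains to replace the local $Q_{\omega'}$-integrals of the $z_k$ (which involve up to $2k$ spatial and $k$ temporal derivatives of $\phi$) by a single local $Q_\omega$-integral of $|\phi|^2$. This is carried out through a cascade of nested cutoffs $\omega\supset\omega'\supset\omega''\supset\cdots\supset\omega^{(n)}$ together with successive integrations by parts in $x$ and $t$; each step trades one derivative for a factor $s\theta$, so after a bounded number of steps one reaches $\mathcal{J}(\tau_0,\phi)\le C_0\iint_{Q_\omega}(s\theta)^{\tau_0+r}|\phi|^2 e^{2s\Phi}\,dx\,dt$ for some $r=r(n)\in\N$.

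Finally, since $P(\partial_t,\mathcal{M})$ has constant coefficients, $\mathcal{M}^i\partial_t^j$ commutes with it and $\mathcal{M}^i\partial_t^j\phi$ still solves \eqref{scalar:parabolic}. Applying the estimate just proved to $\mathcal{M}^i\partial_t^j\phi$ and then integrating the $\mathcal{M}^i\partial_t^j$-derivatives by parts in the final $Q_\omega$-integral (one last nested cutoff inside $\omega$) produces the weight shift $\tau_0\mapsto\tau_0-4(i+j)$: each $\partial_t$ costs $s\theta^{5/4}$ via $|\dot\theta|\lesssim\theta^{5/4}$ and each $\mathcal{M}$ costs $(s\theta)^2$ via two spatial integrations by parts, so the overall cost per derivative is $(s\theta)^{-4}$ on the LHS. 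Summing over $0\le i\le k_1$, $0\le j\le k_2$ and enlarging $r$ as needed yields the stated inequality. The main obstacle is the cutoff/integration-by-parts cascade of Step~3: one must argue by induction on $n$ that every locally generated term can be absorbed without reintroducing uncontrolled derivatives of $\phi$, and simultaneously keep the exponent $r(n)$ finite and explicit.
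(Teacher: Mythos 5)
Your overall strategy (factorize $P(\partial_t,\mathcal{M})$ into the commuting factors $P_i=\partial_t+d_i\mathcal{M}$, run the scalar Carleman estimates of Propositions \ref{propo3.6} and \ref{proposition26} along the cascade $z_k=P_k\cdots P_1\phi$, handle the local terms with nested cutoffs, then treat $\mathcal{M}^i\partial_t^j\phi$) is the same as the paper's, but there is a genuine gap at the absorption step. With the single chain $z_k=P_k\cdots P_1\phi$ your iteration only produces left-hand-side control of the \emph{prefix} products $P_k\cdots P_1\phi$, whereas the source term $F(\phi)$ coming from $P(\partial_t,\mathcal{M})\phi=0$ is a linear combination of $P_{i_p}\cdots P_{i_1}\phi$ over \emph{arbitrary} subsets $1\leqslant i_1<\cdots<i_p\leqslant n$ (e.g.\ $P_3P_1\phi$ never appears in your chain). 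Your claim that these lower products ``all appear on the LHS $\mathcal{J}(\tau_0,\phi)$'' conflates the target functional $\mathcal{J}$ with what you have actually derived at that point; without them the absorption of $F(\phi)$ fails, and the full $\mathcal{J}(\tau_0,\phi)$ is never obtained. The paper fixes this by redoing the entire cascade estimate for \emph{every} permutation $\varPi$ of $\{1,\dots,n\}$ (change of variables $\psi_i=P_{\varPi(i-1)}\psi_{i-1}$), and only after summing over all permutations does the left-hand side contain $I(\cdot,P_{i_p}\cdots P_{i_1}\phi)$ for all subsets, which is exactly what allows $F(\phi)$ to be absorbed for $s$ large. This symmetrization step is the missing idea in your proposal.

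A secondary weakness is your final step. Integrating the $\mathcal{M}^i\partial_t^j$ by parts inside the local $Q_\omega$-integral of $|\mathcal{M}^i\partial_t^j\phi|^2$ piles all derivatives on one factor and, after Cauchy--Schwarz, generates local terms of order $\mathcal{M}^{2i}\partial_t^{2j}\phi$ that are not controlled by the global functionals you have (which contain at most one extra $\mathcal{M}$ or $\partial_t$ beyond $\mathcal{M}^{i}\partial_t^{j}$); as written the ``cost per derivative'' bookkeeping does not close. The paper avoids this entirely: it bounds the local observation term of $\mathcal{M}^i\partial_t^j\phi$ by the \emph{global} $I$-term of one lower derivative (since $I(\tau+4,w)$ dominates $\iint(s\theta)^{\tau}e^{2s\Phi}(|\mathcal{M}w|^2+|\partial_t w|^2)$), yielding the recursion $\mathcal{J}(\tau,\mathcal{M}^i\partial_t^{j}\phi)\leqslant C\,\mathcal{J}(\tau+4+K,\mathcal{M}^{i-1}\partial_t^{j}\phi)$ and its analogue in $j$, and then reapplies Steps 1--2 only to $\phi$ itself. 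You should adopt this recursion (or supply a genuinely complete induction for your integration-by-parts cascade) before the claimed weight shift $\tau_0\mapsto\tau_0-4(i+j)$ and the finiteness of $r(n)$ can be justified.
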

\begin{proof}
	Adapting the technique used by Ammar-Khodja et al. in \cite{A4} to our degenerate case, the proof will be divided in three steps. All along this proof $C$ will be a generic constants
	that may depend on $\omega$, $n$, $a$, $\mathbf{D}$, $A$, $\tau_0$, $k_1$ and $k_2$.\\
{\textbf{Step 1 :}}
	 Let us denote
	 \begin{equation}\label{f(z):def}
	 	F(z)=-\left(\sum\limits_{p=2}^{n-1}\sum\limits_{1\leqslant i_1<\cdots<i_p\leqslant n} \alpha_{i_1,\cdots,i_p}P_{i_1}\cdots P_{i_p}+\sum\limits_{i=1}^{n}\alpha_i P_i +\alpha\right)z,
	 \end{equation}
	 and consider the following change of variables
	 \begin{equation}\label{equ:3.5}
	 	\begin{cases}
	 	\psi_1=z,\\
	 	\psi_{i}=P_{i-1}\psi_{i-1}=(\partial_t+d_{i-1}\mathcal{M})\psi_{i-1},\quad 2\leqslant i\leqslant n.
	 	\end{cases}
	 \end{equation}
Having in mind the regularity assumptions on $z$, \eqref{scalar:parabolic} and \eqref{f(z):def}, one gets $\psi_i,\, F(z)\in L^2(Q)$ for every $i,\,\,1\leqslant i\leqslant n$ and $\Psi=(\psi_1,\cdots,\psi_n)^*$ satisfies the following cascade system
\begin{equation}\label{equ:3.6}
	\begin{cases}
	(\partial_t+d_1 \mathcal{M})\psi_{1}=\psi_2 & \text{ in } Q, \\
	(\partial_t+d_2 \mathcal{M})\psi_{2}=\psi_3 & \text{ in } Q, \\
	                                & \vdots         \\
	(\partial_t+d_n \mathcal{M})\psi_{n}=F(z) & \text{ in } Q, \\
	\mathbf{C}\psi_i=0 \text{ on } \Sigma   &1\leqslant i\leqslant n.
	\end{cases}
\end{equation}
For $i=1,\cdots,n-1$, applying respectively Proposition \ref{propo3.6} and Proposition \ref{proposition26} and combining the two estimates obtained leads to
 \begin{equation}
 		\ds I(\tau_0+3(n-i),\psi_i)\leqslant C \Big(\int\!\!\!\!\!\int_{Q}(s\theta)^{\tau_0+3(n-i)}e^{2s\Phi}{|\psi_{i+1}|}^2+ \int\!\!\!\!\!\int_{Q_{\omega'}}(s\theta)^{\tau_0+3(n-i+1)}e^{2s\Phi}{|\psi_{i}|}^2\Big).
 \end{equation}
 And  for $i=n$, we obtain
  \begin{equation}
  \ds I(\tau_0,\psi_n)\leqslant C \Big(\int\!\!\!\!\!\int_{Q}(s\theta)^{\tau_0}e^{2s\Phi}{|F(z)|}^2+ \int\!\!\!\!\!\int_{Q_{\omega'}}(s\theta)^{\tau_0+3}e^{2s\Phi}{|\psi_{n}|}^2\Big)
  \end{equation}
  for every $s\geqslant s_0$. Thus, a suitable combination of  the above inequalities leads to
\begin{equation}\label{equ:3.9}
	\sum\limits_{i=1}^{n}I(\tau_0+3(n-i),\psi_i)\leqslant C\Big(\sum\limits_{i=1}^{n}\int\!\!\!\!\!\int_{Q_{\omega'}}(s\theta)^{\tau_0+3(n-i+1)}e^{2s\Phi}{|\psi_{i}|}^2+\int\!\!\!\!\!\int_{Q}(s\theta)^{\tau_0}e^{2s\Phi}{|F(z)|}^2 \Big).
\end{equation}
 {\textbf{Step 2 :}}\\
  For $i=1,\cdots,n$, let us introduce the following sequence $(\mathcal{O}_i)_{1\leqslant i\leqslant n}$ of open sets and an associated family of truncation functions $(\chi_i)_{1\leqslant i\leqslant n}$ such that
 \begin{equation}
 	\mathcal{O}_n=\omega'\Subset\mathcal{O}_{n-1}\Subset\cdots\Subset\mathcal{O}_1\equiv\omega\subset(0,1),
 \end{equation}
 and \begin{equation}
 	\begin{cases}
 	\chi_i\in \mathcal{C}^2_c(\mathcal{O}_{i-1}),\\
 	0\leqslant\chi_i\leqslant 1 \text{ in } \mathcal{O}_{i-1},\\
 	\chi_i=1 \text{ in } \overline{\mathcal{O}_i}.
 	\end{cases}
 \end{equation}
 Let $l\geqslant 3$ and $k\in \{2,\cdots,n\}$, we multiply the equation $\ds \partial_t\psi_{k-1}+d_{k-1}\mathcal{M}\psi_{k-1}=\psi_{k} $, satisfied by $\psi_{k-1}$, by $\ds\delta\,\chi_{k}\psi_{k}$ with $\ds \delta=(s\theta)^{\tau_0+l}e^{2s\varPhi}$ and integrate  on $Q$, we obtain

 \begin{align*}
 	\int\!\!\!\!\!\int_{(0,T)\times\mathcal{O}_{k}}(s\theta)^{\tau_0+l}&e^{2s\varPhi}{|\psi_{k}|}^2 dx\,dt = \int\!\!\!\!\!\int_{Q}\delta \,\chi_k{|\psi_{k}|}^2 dx\,dt\\
 	& = \int\!\!\!\!\!\int_{Q}\delta \,\chi_k(\partial_t\psi_{k-1}+d_{k-1}\mathcal{M}\psi_{k-1})\psi_{k}dx\,dt\\
 	&=\underset{I_1}{\underbrace{\int\!\!\!\!\!\int_{Q}\delta\,\chi_k\partial_t (\psi_{k-1}) \psi_k dx\,dt} }+
 	\underset{I_2}{\underbrace{\int\!\!\!\!\!\int_{Q} d_{k-1}\delta\,\chi_k \psi_k\mathcal{M}\psi_{k-1} dx\,dt } }.\numberthis\label{equ:3.12}
 \end{align*}
 For every $\nu\leqslant \mu $  and  $(t,x)\in Q$, we have
\begin{equation}
\begin{cases}
	 {(s\theta)}^{\nu}\leqslant {(s\theta)}^{\mu},\\
	 |{\left({(s\theta)}^{\nu}e^{2s\varPhi}\right)}_x|\leqslant {(s\theta)}^{\nu+1}e^{2s\varPhi},\\
	 |{\left({(s\theta)}^{\nu}e^{2s\varPhi}\right)}_t|\leqslant {(s\theta)}^{\nu+2}e^{2s\varPhi}.
\end{cases}
\end{equation}
We have
\begin{align*}
	  I_1 &= -\underset{I_1^{(1)}}{\underbrace{\int\!\!\!\!\!\int_{Q}\chi_k \psi_{k-1} \psi_k \partial_t \delta dx\,dt}} -\underset{I_1^{(2)}}{\underbrace{\int\!\!\!\!\!\int_{Q}\delta\,\chi_k \psi_{k-1}\partial_t\psi_k dx\,dt}}.
\end{align*}
 Since the function $x\mapsto\frac{x^2}{a(x)}$ is  bounded on $\ds \mathcal{O}_{k-1}$, we have
 \begin{align*}
I_1^{(1)}& \leqslant C\iint_{Q}{(s\theta)}^{\tau_0+l+2}  e^{2s\varPhi}\chi_k \psi_{k-1} \psi_k dx\,dt \\
&\leqslant  C\int\!\!\!\!\!\int_{Q}\Big((s\theta)^{\frac{\tau_0}{2}+\frac{3(n-k)}{2}+\frac32}\sqrt{\chi_k}e^{s\varPhi}\sqrt{\frac{2\varepsilon}{C}}\sqrt{\frac{x^2}{a}} \psi_k \Big)dx\,dt\\ & \qquad\qquad\times\Big(  (s\theta)^{\frac{\tau_0}{2}+l-\frac{3(n-k)}{2}+\frac12}\sqrt{\chi_k}e^{s\varPhi}\sqrt{\frac{C}{2\varepsilon}}\sqrt{\frac{x^2}{a}}\psi_{k-1}\Big)dx\,dt\\
&\leqslant \varepsilon\int\!\!\!\!\!\int_{Q}(s\theta)^{\tau_0+3(n-k)+3}\chi_k e^{2 s\varPhi}\frac{x^2}{a} { \psi_k }^2dx\,dt +  \frac{C}{4\varepsilon}\int\!\!\!\!\!\int_{Q}(s\theta)^{\tau_0+2l-3(n-k)+1}\chi_k e^{2 s\varPhi}\frac{x^2}{a}\psi_{k-1}^2dx\,dt.
\end{align*}
Likewise, we get
	\begin{align*}
	I_1^{(2)}&\leqslant  \int\!\!\!\!\!\int_{Q}{(s\theta)}^{\tau_0+l} e^{2s\varPhi} \chi_k \psi_{k-1} \partial_t\psi_k dx\,dt  \\
	&\leqslant \int\!\!\!\!\!\int_{Q}\Big({(s\theta)}^{\frac{\tau_0}{2}+\frac{3(n-k)}{2}-\frac12} e^{s\varPhi}\sqrt{\chi_k}\partial_t \psi_{k}\sqrt{2\varepsilon}\Big)\times\Big({(s\theta)}^{\frac{\tau_0}{2}+l-\frac{3(n-k)}{2}+\frac12} e^{s\varPhi}\sqrt{\chi_k} \psi_{k-1}\sqrt{\frac{1}{2\varepsilon}}\Big)dx\,dt \\
	&\leqslant \varepsilon \int\!\!\!\!\!\int_{Q}
	{(s\theta)}^{\tau_0 +3(n-k)-1} e^{2s\varPhi}\chi_k\partial_t \psi_{k}^2dx\,dt+\frac{1}{4\varepsilon} \int\!\!\!\!\!\int_{Q}{(s\theta)}^{\tau_0+2l-3(n-k)+1} e^{2s\varPhi}\chi_k \psi_{k-1}^2 dx\,dt.
	\end{align*}
Therefore
\begin{align*}
	I_1&\leqslant  \varepsilon I(\tau_0+3(n-k),\psi_k)+\frac{C}{2\varepsilon} \int\!\!\!\!\!\int_{(0,T)\times\mathcal{O}_{k-1}}{(s\theta)}^{\tau_0+2l-3(n-k)+1} e^{2s\varPhi}\psi_{k-1}^2 dx\,dt.
\end{align*}
On the other hand, for $I_2$ we have 	
\begin{align*}
	 \int\!\!\!\!\!\int_{Q}\mathcal{M}( \delta\,\chi_k \psi_k)  \psi_{k-1} dxdt & =\underset{I_2^{(1)}}{\underbrace{\int\!\!\!\!\!\int_{Q}  \psi_k  \psi_{k-1}\mathcal{M}( \delta\,\chi_k)dxdt}}+\underset{I_2^{(2)}}{\underbrace{\int\!\!\!\!\!\int_{Q}2a( \delta\,\chi_k)_x( \psi_k)_x  \psi_{k-1}dxdt}}\\
	 &\qquad\qquad+\underset{I_2^{(3)}}{\underbrace{\int\!\!\!\!\!\int_{Q} \delta\,\chi_k \psi_{k-1}\mathcal{M}(\psi_k) dxdt}}.
\end{align*}
Since $Supp(\mathcal{M}(\delta\,\chi_k))\subset\mathcal{O}_{k-1} $, $|\mathcal{M}(\delta\,\chi_k)|\leqslant C(s\theta)^{\tau_0+l+2}e^{2s\varPhi}$ and the function $\ds x\mapsto \frac{x^2}{a}$ is bounded on $\mathcal{O}_{k-1}$, then
\begin{align*}
&I_2^{(1)}\leqslant C \int\!\!\!\!\!\int_{(0,T)\times\mathcal{O}_{k}} (s\theta)^{\tau_0+l+2}e^{2s\varPhi} \psi_k  \psi_{k-1}  dxdt \\
&\leqslant C \int\!\!\!\!\!\int_{(0,T)\times\mathcal{O}_{k}} \Big((s\theta)^{\frac{\tau_0}{2}+\frac{3(n-k)}{2}+\frac32}e^{s\varPhi}\sqrt{\frac{\varepsilon}{C}}\sqrt{\frac{x^2}{a}} \psi_k \Big)\Big( (s\theta)^{\frac{\tau_0}{2}+l-\frac{3(n-k)}{2}+\frac12}e^{s\varPhi}\sqrt{\frac{C}{\varepsilon}}\sqrt{\frac{x^2}{a}} \psi_{k-1}\Big)  dxdt \\
&\leqslant \frac{\varepsilon}{2} \int\!\!\!\!\!\int_{(0,T)\times\mathcal{O}_{k}} (s\theta)^{\tau_0+3(n-k)+3}e^{2s\varPhi}\frac{x^2}{a} \psi_k^2 dxdt +\frac{C}{2\varepsilon} \int\!\!\!\!\!\int_{(0,T)\times\mathcal{O}_{k-1}} (s\theta)^{\tau_0+2l-3(n-k)+1}e^{2s\varPhi}\frac{x^2}{a} \psi_{k-1}^2 dxdt.\numberthis\label{th:I_2^{(1)}}
\end{align*}
Likewise, as $Supp(\delta\,\chi_k)\subset\mathcal{O}_{k-1} $ and $\Big|(\delta\,\chi_k)_x\Big|\leqslant C (s\theta)^{\tau_0+l+1}e^{2s\varPhi}$ we have
\begin{align*}
        &I_2^{(2)} \leqslant C\int\!\!\!\!\!\int_{(0,T)\times\mathcal{O}_{k}}(s\theta)^{\tau_0+l+1}e^{2s\varPhi} ( \psi_k)_x  \psi_{k-1} dxdt \\
        & \leqslant C\int\!\!\!\!\!\int_{(0,T)\times\mathcal{O}_{k}}\Big((s\theta)^{\frac{\tau_0}{2}+\frac{3(n-k)}{2}+\frac12}\sqrt{\frac{\varepsilon}{C}}\sqrt{a}e^{s\varPhi} ( \psi_k)_x  \Big)\Big((s\theta)^{\frac{\tau_0}{2}+l-\frac{3(n-k)}{2}+\frac12}\sqrt{\frac{C}{\varepsilon}}\sqrt{\frac{x^2}{a}}e^{s\varPhi}  \psi_{k-1}\Big) dxdt \\
        & \leqslant \frac{\varepsilon}{2} \int\!\!\!\!\!\int_{(0,T)\times\mathcal{O}_{k}}(s\theta)^{\tau_0+3(n-k)+1}a e^{2s\varPhi} ( \psi_k)_x^2  dxdt+ \frac{C}{2\epsilon} \int\!\!\!\!\!\int_{(0,T)\times\mathcal{O}_{k}} (s\theta)^{\tau_0+2l-3(n-k)+1}\frac{x^2}{a}e^{2s\varPhi}  \psi_{k-1}^2 dxdt \\
        & \leqslant \frac{\varepsilon}{2} \int\!\!\!\!\!\int_{(0,T)\times\mathcal{O}_{k}}(s\theta)^{\tau_0+3(n-k)+1}a e^{2s\varPhi} ( \psi_k)_x^2  dxdt+ \frac{C}{2\epsilon} \int\!\!\!\!\!\int_{(0,T)\times\mathcal{O}_{k-1}} (s\theta)^{\tau_0+2l-3(n-k)+1}\frac{x^2}{a}e^{2s\varPhi}  \psi_{k-1}^2 dxdt.\numberthis\label{th:I_2^{(2)}}
\end{align*}

\begin{align*}
	    &I_2^{(3)}\leqslant C \int\!\!\!\!\!\int_{Q}  (s\theta)^{\tau_0+l}e^{2s\varPhi}\chi_k \mathcal{M}(\psi_k)  \psi_{k-1} dxdt \\
	    &\leqslant C \int\!\!\!\!\!\int_{Q} \Big( (s\theta)^{\frac{\tau_0}{2}+\frac{3(n-k)}{2}-\frac12}\sqrt{\chi_k}e^{s\varPhi}\sqrt{\frac{\varepsilon}{C}} (\mathcal{M}\psi_k)\Big) \Big( (s\theta)^{\frac{\tau_0}{2}+l-\frac{3(n-k)}{2}+\frac12}\sqrt{\chi_k}\sqrt{\frac{x^2}{a}}e^{s\varPhi}\sqrt{\frac{C}{\varepsilon}} \psi_{k-1}\Big) dxdt \\
	    &\leqslant \frac{\varepsilon}{2} \int\!\!\!\!\!\int_{Q} (s\theta)^{\tau_0+3(n-k)-1}\chi_k e^{2s\varPhi} (\mathcal{M}\psi_k)^2 dxdt+ \frac{C}{2\varepsilon} \int\!\!\!\!\!\int_{Q} (s\theta)^{\tau_0+2l-3(n-k)+1} \chi_k \frac{x^2}{a} e^{2s\varPhi} \psi_{k-1}^2 dxdt \\
& \leqslant \frac{\varepsilon}{2} \int\!\!\!\!\!\int_{Q} (s\theta)^{\tau_0+3(n-k)-1}\chi_k e^{2s\varPhi} (\mathcal{M}\psi_k)^2 dxdt+ \frac{C}{2\varepsilon} \int\!\!\!\!\!\int_{(0,T)\times\mathcal{O}_{k-1}} (s\theta)^{\tau_0+2l-3(n-k)+1}\frac{x^2}{a} e^{2s\varPhi} \psi_{k-1}^2 dxdt. \numberthis\label{th:I_2^{(3)}}
\end{align*}
From \eqref{th:I_2^{(1)}}-\eqref{th:I_2^{(3)}} we get
\begin{equation}
	I_2\leqslant \varepsilon I(\tau_0+3(n-k),\psi_{k})+ \frac{C}{\varepsilon} \int\!\!\!\!\!\int_{(0,T)\times\mathcal{O}_{k-1}} (s\theta)^{\tau_0+2l-3(n-k)+1}\frac{x^2}{a} e^{2s\varPhi} \psi_{k-1}^2 dxdt.
\end{equation}
 Coming back to \eqref{equ:3.12} we get
 \begin{align*}
 \int\!\!\!\!\!\int_{(0,T)\times\mathcal{O}_{k}}(s\theta)^{\tau_0+l}e^{2s\varPhi}{|\psi_{k}|}^2
 &\leqslant \varepsilon I(\tau_0+3(n-k),\psi_{k})\\
 &\qquad\qquad+\frac{C}{\varepsilon} \int\!\!\!\!\!\int_{(0,T)\times\mathcal{O}_{k-1}} (s\theta)^{\tau_0+2l-3(n-k)+1}\frac{x^2}{a} e^{2s\varPhi} \psi_{k-1}^2 dxdt,\numberthis\label{equ:3.14}
 \end{align*}
 with $\varepsilon>0$.
 	For $l=2$ and $\varepsilon=\frac{1}{2C}$, where $C$ is the constant used in \eqref{equ:3.9}
 	 \begin{align*}
 	 \int\!\!\!\!\!\int_{(0,T)\times\mathcal{O}_{n}}(s\theta)^{\tau_0+2}e^{2s\varPhi}{|\psi_{n}|}^2
 	 &\leqslant \frac{1}{2C} I(\tau_0,\psi_{n})+2C^2 \int\!\!\!\!\!\int_{(0,T)\times\mathcal{O}_{n-1}} (s\theta)^{\tau_0+5}\frac{x^2}{a} e^{2s\varPhi} \psi_{n-1}^2 dxdt.\numberthis\label{cc3.49}
 	 \end{align*}

 So, from \eqref{equ:3.9} and \eqref{cc3.49} we infer
\begin{equation*}\label{}
\sum\limits_{i=1}^{n}I(\tau_0+3(n-i),\psi_i)\leqslant C\Big(\sum\limits_{i=1}^{n-1}\int\!\!\!\!\!\int_{(0,T)\times\mathcal{O}_{n-1}}(s\theta)^{\tau_0+k(i)}e^{2s\Phi}{|\psi_{i}|}^2dxdt+\int\!\!\!\!\!\int_{Q}(s\theta)^{\tau_0}e^{2s\Phi}{|F(z)|}^2dxdt \Big),
\end{equation*}
where $k(i)=\max(5,3(n-i+1))$.\\
By iterating this operation $(n-1)$ times, there exist a positive constant $C>0$ and an integer $K=K(n)$ such that
\begin{equation}\label{}
\sum\limits_{i=1}^{n}I(\tau_0+3(n-i),\psi_i)\leqslant C\Big(\int\!\!\!\!\!\int_{(0,T)\times\omega}(s\theta)^{\tau_0+K}e^{2s\Phi}
{|\psi_1|}^2dxdt+\int\!\!\!\!\!\int_{Q}(s\theta)^{\tau_0}e^{2s\Phi}{|F(z)|}^2dxdt \Big),
\end{equation}
which in view of \eqref{equ:3.5} implies
\begin{align*}
& I(\tau_0+3(n-1),z)+\sum\limits_{i=2}^{n}I(\tau_0+3(n-i),P_{i-1}\cdots P_{1}z)\\
&\qquad \leqslant C\Big( \int\!\!\!\!\!\int_{(0,T)\times\omega} (s\theta)^{\tau_0+K}e^{2s\Phi}{|z|}^2dxdt +\int\!\!\!\!\!\int_{Q}(s\theta)^{\tau_0}e^{2s\Phi}{|F(z)|}^2 dxdt \Big).\numberthis\label{equ:3.15}	
\end{align*}
Now, at this level the left-hand-side of \eqref{equ:3.15} does not contains enough terms to absorb the
term corresponding to $F(z)$. So, in order to absorb the term $F(z)$, let $\varPi$ denote then any permutation of the set $\{1,2,\cdots,n\}$ and consider, instead of \eqref{equ:3.5}, the new change of variable
\begin{equation}
\begin{cases}
\psi_1=z,\\
\psi_{i}=P_{\varPi(i-1)}\psi_{i-1}=(\partial_t+d_{\varPi(i-1)}\mathcal{M})\psi_{i-1},\quad 2\leqslant i\leqslant n.
\end{cases}
\end{equation}
Then system \eqref{equ:3.6} becomes
\begin{equation*}
\begin{cases}
(\partial_t+d_{\varPi(1)} \mathcal{M})\psi_{1}=\psi_2 & \text{ in } Q, \\
(\partial_t+d_{\varPi(2)} \mathcal{M})\psi_{2}=\psi_3 & \text{ in } Q, \\
& \vdots         \\
(\partial_t+d_{\varPi(n)} \mathcal{M})\psi_{n}=F(z) & \text{ in } Q, \\
\mathbf{C}\psi_i=0 \text{ on } \Sigma   & \forall i~:~ 1\leqslant i\leqslant n.
\end{cases}
\end{equation*}
The same procedure as above leads to a similar estimate as \eqref{equ:3.15} which reads then
\begin{align*}
&I(\tau_0+3(n-1),z)+\sum\limits_{i=2}^{n}I(\tau_0+3(n-i),P_{\varPi(i-1)}\cdots P_{\varPi(1)}z)\\
&\leqslant C\Big(\int\!\!\!\!\!\int_{(0,T)\times\omega}(s\theta)^{\tau_0+K}e^{2s\Phi}{|z|}^2 dxdt +\int\!\!\!\!\!\int_{Q}(s\theta)^{\tau_0}e^{2s\Phi}{|F(z)|}^2 dxdt \Big).\numberthis\label{equ:3.17}	
\end{align*}
Now, considering all such possible permutations with associated change of variable, we finally obtain
\begin{align*}
& I(\tau_0+3(n-1),z)+\sum\limits_{i=2}^{n}I(\tau_0+3(n-i),P_{i}z)\\
&\quad+\sum\limits_{p=2}^{n-1}\sum\limits_{1\leqslant i_1<\cdots<i_p\leqslant n}I(\tau_0+3(n-p-1),P_{i_p}\cdots P_{i_1}z)\\
&\qquad \leqslant C\Big(\int\!\!\!\!\!\int_{(0,T)\times\omega}(s\theta)^{\tau_0+K}e^{2s\Phi}{|z|}^2 dxdt +\int\!\!\!\!\!\int_{Q}(s\theta)^{\tau_0}e^{2s\Phi}{|F(z)|}^2 dxdt \Big).\numberthis\label{equ:3.18}	
\end{align*}
From the definition of $F(z)$ \eqref{f(z):def}, we deduce
\begin{equation}\label{equ:3.19}
\int\!\!\!\!\!\int_{Q}(s\theta)^{\tau_0}e^{2s\Phi}{|F(z)|}^2 dxdt\leqslant C \int\!\!\!\!\!\int_{Q}(s\theta)^{\tau_0}e^{2s\Phi}\left(|z|^2+\sum\limits_{i=1}^{n}|P_i z|^2+\sum\limits_{p=2}^{n-1}\sum\limits_{1\leqslant i_1<\cdots<i_p\leqslant n}|P_{i_p}\cdots P_{i_1}z|^2\right)dxdt.
\end{equation}
Choosing $s$  large enough such that
$C(s\theta)^{\tau_0}\leqslant\frac{1}{2}(s\theta)^{\tau_0+2(n-p)},\,\,\forall p: 0\leqslant p\leqslant n-1, $ and
from  \eqref{equ:3.18} and \eqref{equ:3.19} we get

\begin{align*}
& I(\tau_0+3(n-1),z)+\sum\limits_{i=2}^{n}I(\tau_0+3(n-i),P_{i}z)\\
&+\sum\limits_{p=2}^{n-1}\sum\limits_{1\leqslant i_1<\cdots<i_p\leqslant n}I(\tau_0+2(n-p-1),P_{i_p}\cdots P_{i_1}z)\leqslant C\int\!\!\!\!\!\int_{(0,T)\times\omega}(s\theta)^{\tau_0+K}e^{2s\Phi}{|z|}^2.\numberthis\label{equ:3.20}	
\end{align*}
This can be written as
\begin{equation*}
	\ds \mathcal{J}(\tau_0,z)\leqslant C\int\!\!\!\!\!\int_{(0,T)\times\omega}(s\theta)^{\tau_0+K}e^{2s\Phi}{|z|}^2.
\end{equation*}
{\textbf{Step  3:}}\\
  From the regularity assumptions imposed on $z$, if $1\leqslant i\leqslant k_1$ and $1\leqslant j\leqslant k_2$,  $\mathcal{M}^i\partial_t^j z $ also satisfies the equation \eqref{scalar:parabolic}. Therefore, by applying the two preceding steps to $\mathcal{M}^i\partial_t^j z $, there exist two two positive constants $C_{\tau}$ and $s_{\tau}$ such that:
 	\begin{equation}
 	\mathcal{J}(\tau,\mathcal{M}^i\partial_t^j z)\leqslant C_{\tau}\int\!\!\!\!\!\int_{(0,T)\times\omega} (s\theta)^{\tau+K}e^{2s\Phi}{|\mathcal{M}^i\partial_t^j z|}^2\;dx\,dt,
 	\end{equation}
 for every $s\geqslant s_{\tau}$
 	
 Now for $s$ large enough we have
 \begin{align*}
 	 \int\!\!\!\!\!\int_{(0,T)\times\omega} (s\theta)^{\tau}e^{2s\Phi}\left({|\mathcal{M} z|}^2+{|\partial_t z|}^2 \right)dxdt 
 	 & \leqslant I(\tau+4,z)\\
 	 & \leqslant \mathcal{J}(\tau+4,z).
 \end{align*}
 Thus by iterating this process, one gets
 \begin{equation}
 \begin{cases}
 \mathcal{J}(\tau,\mathcal{M}^i\partial_t^{j}z)\leqslant C \mathcal{J}(\tau +4+K,\mathcal{M}^{i-1}\partial_t^{j}z),\\
 \mathcal{J}(\tau,\mathcal{M}^i\partial_t^{j}z)\leqslant C \mathcal{J}(\tau +4+K,\mathcal{M}^i\partial_t^{j-1}z).
 \end{cases}
 \end{equation}
 Therefore, by applying successively the last inequalities we deduce
 \begin{equation}
 \mathcal{J}(\tau,\mathcal{M}^i\partial_t^{j}z)\leqslant C \mathcal{J}(\tau +(4+K)(i+j),z).
 \end{equation}

Thus, taking in account \eqref{equ:3.20}, we get
\begin{align*}
\sum\limits_{i=0}^{k_1}\sum\limits_{j=0}^{k_2}\mathcal{J}(\tau_0-4(i+j),\mathcal{M}^i\partial_t^j z)
&\leqslant  \sum\limits_{i=0}^{k_1}\sum\limits_{j=0}^{k_2}C\mathcal{J}(\tau_0+K(i+j),z)\\
&\leqslant  C\mathcal{J}(\tau_0+K(k_1+k_2),z).\end{align*}
Consequently, we have
\begin{align*}
\sum\limits_{i=0}^{k_1}\sum\limits_{j=0}^{k_2}\mathcal{J}(\tau_0-4(i+j),\mathcal{M}^i\partial_t^j z)
&\leqslant C_0 \iint_{\omega_T}(s\theta)^{\tau_0+r}e^{2s\varPhi}{|\phi|}^2dx\,dt,
\end{align*}
where $r=K(k_1+k_2+1)$.
\end{proof}

\section{Null controllability of problem \eqref{syst1}}

Now, we will show the Carleman estimate for the adjoint problem \eqref{syst2adjoint}. Recall \\ $\ds\mathbb{D}=\cap_{p=0}^{\infty}D(\mathcal{M}^p)$ which is dense in $L^2(0,1)$. We have the following result see \cite[Proposition 3.3.]{A4}
\begin{proposition}\label{pro:3.3}
	Let $\varphi_0\in \mathbb{D}^n$ and let $\varphi=(\varphi_1,\cdots,\varphi_n)^*$ be the corresponding solution of problem \eqref{syst2adjoint}. Then, $\varphi \in \mathcal{C}^k([0,T];D(\mathcal{M}^p)^n)$ for every $k,\,p\geqslant 0$, and for every $i$ (with $1\leqslant i\leqslant n$) $\varphi_i$ solves equation \eqref{scalar:parabolic}. 
\end{proposition}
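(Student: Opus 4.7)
The plan is to establish the two conclusions separately: the smoothness/domain regularity of $\varphi$ by semigroup arguments using the hypothesis $\varphi_0\in\mathbb{D}^n$, and the scalar equation for each component $\varphi_i$ by a Cayley--Hamilton style cofactor identity, exploiting that the matrix operator $\partial_t I_d+\mathbf{D}^*\mathcal{M}+A^*$ has entries in a commutative algebra generated by $\partial_t$, $\mathcal{M}$ and constants.

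For the regularity, the first step is to set $L^*:=\mathbf{D}^*\mathcal{M}+A^*$ with $D(L^*)=D(\mathcal{M})^n$ and to observe that, because $\mathbf{D}^*$ is invertible (it has the same positive eigenvalues as $\mathbf{D}$), the identity $\mathcal{M}=(\mathbf{D}^*)^{-1}(L^*-A^*)$ implies $D((L^*)^p)=D(\mathcal{M}^p)^n$ for every $p\ge 0$, with equivalent graph norms. Since $-\mathcal{M}$ is self-adjoint positive with compact resolvent, $-L^*$ is a bounded perturbation (by the $0$-order matrix $-A^*$) of $\mathbf{D}^*$ times the generator $-\mathcal{M}$, hence it generates a $C_0$-semigroup on $L^2(0,1)^n$. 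Therefore the backward problem \eqref{syst2adjoint} has a unique mild solution $\varphi(t)=S^*(T-t)\varphi_0$, and the hypothesis $\varphi_0\in\mathbb{D}^n=\bigcap_{p\ge 0} D((L^*)^p)$ propagates through the semigroup: standard results on classical solutions with data in $\bigcap_p D((L^*)^p)$ give $\partial_t^k\varphi(t)=(-L^*)^kS^*(T-t)\varphi_0\in C([0,T];D((L^*)^p)^n)$ for all $k,p\ge 0$, which is exactly $\varphi\in \mathcal{C}^k([0,T];D(\mathcal{M}^p)^n)$.

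For the scalar equation, I would write the system in the form $(\partial_tI_d+L^*)\varphi=0$ and let $M:=\partial_tI_d+\mathbf{D}^*\mathcal{M}+A^*$. The key observation is that the entries of $M$ are polynomials in the two commuting scalar operators $\partial_t$ and $\mathcal{M}$ with constant matrix coefficients; in particular, all $n^2$ entries of $M$ commute pairwise on the algebra $\bigcap_{k,p}\mathcal{C}^k([0,T];D(\mathcal{M}^p))$, which is exactly the space where $\varphi$ lives by the first part. In this commutative setting the classical cofactor identity
\begin{equation*}
\mathrm{adj}(M)\,M = \det(M)\,I_d
\end{equation*}
is meaningful and holds. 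Applying it to $\varphi$ and using $M\varphi=0$ yields $\det(M)\,\varphi_i=0$ for each $i$, which is precisely $P(\partial_t,\mathcal{M})\varphi_i=0$ in $Q$. The needed boundary conditions $\mathbf{C}\mathcal{M}^k\varphi_i=0$ on $\Sigma$ for every $k\ge 0$ come for free: since $\varphi_i(t,\cdot)\in D(\mathcal{M}^p)$ for all $p\ge 0$ and all $t$, the iterate $\mathcal{M}^k\varphi_i(t,\cdot)$ lies in $D(\mathcal{M})$, hence satisfies the corresponding boundary condition $\mathbf{C}$ imposed in the definition of $D(\mathcal{M})$.

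The delicate step is the second one, namely justifying the cofactor identity at the operator level; the potential pitfall is domain issues when expanding $\det(M)$ as a polynomial of order $n$ in $\partial_t$ and $\mathcal{M}$, since raw application of unbounded operators needs each intermediate quantity to be well defined. This is exactly why the regularity statement must be proved first: once we know $\mathcal{M}^i\partial_t^j\varphi\in L^2(Q)^n$ for every $i,j\ge 0$, every term produced by expanding $\det(M)$ acts in the classical pointwise sense and the cofactor manipulation reduces to an algebraic identity in the commutative polynomial ring $\mathbb{R}[\partial_t,\mathcal{M}]$, making the derivation of \eqref{scalar:parabolic} rigorous.
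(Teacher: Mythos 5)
Your proposal is correct and takes essentially the same route as the paper, which gives no in-text proof and simply defers to \cite[Proposition 3.3]{A4}: propagate the regularity of data in $\mathbb{D}^n$ through the semigroup generated by $\mathbf{D}^*\mathcal{M}+A^*$ (using $D((\mathbf{D}^*\mathcal{M}+A^*)^p)=D(\mathcal{M}^p)^n$), then apply the adjugate identity $\mathrm{adj}(M)M=\det(M)I_d$ to $M=\partial_t I_d+\mathbf{D}^*\mathcal{M}+A^*$, whose entries commute as polynomials in $\partial_t$ and $\mathcal{M}$ on the smooth class obtained in the first step, and read off the boundary conditions from $\varphi_i(t,\cdot)\in D(\mathcal{M}^p)$ for all $p$. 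Your filled-in argument is sound as stated.
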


\begin{theorem}\label{thm4.2}
	Assume $\mathbf{D}$ satisfies the condition \eqref{D:diagonalizable},  Then, given  $\tau\in\R$ and
$k\geqslant (n-1)(2n-1)$, there exist $r=r(n)\in\N$ and two positive constants $C$ and  $\sigma$  such that for every $\varphi_0\in L^2(0,1)^n$ the corresponding solution $\varphi$ to the adjoint problem \eqref{syst2adjoint}  satisfies
\begin{equation}\label{equ:thm4.2}
	\int_{0}^{T}(s\theta)^{\tau}e^{-2sM_0\theta}\| \mathcal{M}^k\mathcal{K}^*\varphi\|_{L^2(0,1)^n}^2\leqslant C\int\!\!\!\!\!\int_{\omega_T}(s\theta)^{\tau+\kappa+r}e^{2s\Phi}|B^*\varphi|^2 ,
\end{equation}
where $\ds M_0=\underset{x\in (0,1)}{\max}\psi(x) $ and $\kappa=4k+n-4$.
\end{theorem}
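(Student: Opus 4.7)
My plan is to reduce \eqref{equ:thm4.2} to the scalar Carleman estimate of Theorem \ref{theorem:3.2} applied to each scalar component of $B^*\varphi$, relying on two algebraic facts. By Proposition \ref{pro:3.3}, each $\varphi_i$ is smooth in space and time and satisfies $P(\partial_t,\mathcal{M})\varphi_i=0$; by linearity of $P(\partial_t,\mathcal{M})$ and of the boundary operator $\mathbf{C}$, every scalar component $(B^*\varphi)_l$ is likewise a solution of \eqref{scalar:parabolic}. Moreover, from the adjoint equation $-\partial_t\varphi=L^*\varphi$ a straightforward induction gives $(L^*)^j\varphi=(-1)^j\partial_t^j\varphi$, and since $\mathcal{M}$, $\partial_t$ and the constant matrix $B^*$ pairwise commute, this identity transforms the coupled block $\mathcal{M}^k\mathcal{K}^*\varphi$ into scalar derivatives of $B^*\varphi$, so that
\[
\|\mathcal{M}^k\mathcal{K}^*\varphi\|_{L^2(0,1)^n}^2 \;=\; \sum_{j=0}^{n-1}\|\mathcal{M}^k\partial_t^j B^*\varphi\|_{L^2(0,1)^m}^2.
\]

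I would then apply Theorem \ref{theorem:3.2} scalar-wise to each $(B^*\varphi)_l$, with parameters $k_1=k-1$, $k_2=n-1$ and $\tau_0=\tau+\kappa=\tau+4k+n-4$. Summing over $l$ yields
\[
\sum_{l}\sum_{i=0}^{k-1}\sum_{j=0}^{n-1}\mathcal{J}\bigl(\tau_0-4(i+j),\mathcal{M}^i\partial_t^j(B^*\varphi)_l\bigr) \;\leq\; C\int\!\!\!\!\!\int_{Q_\omega}(s\theta)^{\tau+\kappa+r}e^{2s\Phi}|B^*\varphi|^2\,dx\,dt,
\]
whose right-hand side is exactly that of \eqref{equ:thm4.2}. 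From the left-hand side I would isolate, for each $j\in\{0,\ldots,n-1\}$, the contribution of the $(\mathcal{M}z)^2$-term inside $I(\tau+3(n-1),z)$ taken with $z=\mathcal{M}^{k-1}\partial_t^j(B^*\varphi)_l$ (i.e., with $i=k-1$), which is precisely the term in $I(\cdot,z)$ that does \emph{not} carry the degenerate spatial factor $x^2/a(x)$. A direct computation of the exponent shifts in the definitions of $\mathcal{J}$ and $I$ shows that this contribution carries a power of $s\theta$ equal to $\tau+4(n-1-j)\geq \tau$; since $s\theta\geq 1$ for $s\geq s_0$, each such term dominates
\[
\int\!\!\!\!\!\int_Q (s\theta)^{\tau}\,e^{2s\varphi}\,|\mathcal{M}^k\partial_t^j B^*\varphi|^2\,dx\,dt.
\]

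The last step is to trade the $x$-dependent Carleman weight $e^{2s\varphi(t,x)}=e^{2s\theta(t)\psi(x)}$ for the spatially uniform weight $e^{-2sM_0\theta(t)}$ of \eqref{equ:thm4.2}, via a pointwise comparison between $\psi(x)$ and $M_0=\max_{x\in(0,1)}\psi(x)$; then summing over $j=0,\ldots,n-1$ and recombining via the $L^2$-identity of the first paragraph produces $\|\mathcal{M}^k\mathcal{K}^*\varphi\|^2$ on the left-hand side and finishes the proof. The main obstacle is precisely this last weight-matching step, which is not routine in the degenerate setting: the naive choice of extracting the $z^2$-contribution of $I(\cdot,z)$ fails because of the degenerate factor $x^2/a(x)$ that vanishes at $x=0$, which forces the use of the $(\mathcal{M}z)^2$-term and a careful book-keeping of the resulting surplus of $s\theta$. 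The polynomial losses generated by this bookkeeping, together with those produced by iterating Theorem~\ref{theorem:3.2} across all pairs $(i,j)$, are what ultimately fix the integer $r=r(n)$ appearing in the statement and mark the main departure from the non-degenerate analysis of \cite{A4}.
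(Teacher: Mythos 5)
Your proposal is correct and follows essentially the paper's own route: express $\mathcal{M}^k\mathcal{K}^*\varphi$ through $\mathcal{M}^k\partial_t^j B^*\varphi$ via $(L^*)^j\varphi=(-1)^j\partial_t^j\varphi$, note that each $(B^*\varphi)_l$ solves \eqref{scalar:parabolic}, apply Theorem \ref{theorem:3.2} with $\tau_0=\tau+4k+n-4$, $k_2=n-1$, and conclude by comparing the uniform weight $e^{-2sM_0\theta}$ with the Carleman weights (a comparison the paper glosses at exactly the same level, and which requires reading $M_0$ as $\max|\psi|=\lambda c$, consistent with the parameter choices). The only differences are cosmetic: you take $k_1=k-1$ and recover $\mathcal{M}^k\partial_t^jB^*\varphi$ from the $(\mathcal{M}z)^2$-term of $I$ (a sensible way to avoid the degenerate factor $x^2/a$), whereas the paper takes $k_1=k$ and dominates $\iint(s\theta)^{\tau}e^{2s\Phi}|\mathcal{M}^k\partial_t^j(B^*\varphi)_i|^2$ by $I(\tau-3,\mathcal{M}^k\partial_t^j(B^*\varphi)_i)$ directly; you should also state the argument first for $\varphi_0\in\mathbb{D}^n$ (where Proposition \ref{pro:3.3} applies) and append the paper's routine density passage to general $\varphi_0\in L^2(0,1)^n$.
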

\begin{proof}
	 Assume $\varphi_0\in \mathbb{D}^n$ and let $\varphi$ the solution of the adjoint problem \eqref{syst2adjoint} corresponding to $\varphi_0$. By Proposition \ref{pro:3.3} we have $\varphi\in\mathcal{C}^l([0,T];D(\mathcal{M}^p)^n)$ for every $l,\,p\geqslant 0$; and solves equation \eqref{scalar:parabolic}.
	 Likewise $(B^*\varphi)_j$ is in $\mathcal{C}^l([0,T];D(\mathcal{M}^p)^n)$; and solves equation \eqref{scalar:parabolic}, for all $j$ (with $1\leqslant j\leqslant m$).
From the expression of $\mathcal{K}^*$ we have
\begin{equation}
\mathcal{K}^*\varphi(t,\cdot)=\left((-1)^{n-1}\partial_t^{n-1} B^*\varphi,(-1)^{n-2}\partial_t^{n-2} B^*\varphi,\cdots,-\partial_t B^*\varphi, B^*\varphi\right)^*(t,\cdot)
\end{equation}
We have
\begin{align*}
\int_{0}^{T}(s\theta)^{\tau}e^{-2sM_0\theta}\| \mathcal{M}^k\mathcal{K}^*\varphi\|_{L^2(0,1)^n}^2
&\leqslant \sum_{i=1}^{m}\int\!\!\!\!\!\int_{Q}\sum_{j=0}^{n-1}(s\theta)^{\tau}e^{2s\Phi}|\mathcal{M}^k\partial_t^j (B^*\varphi)_i|^2 dxdt\\
&\leqslant \sum_{i=1}^{m}\sum_{j=0}^{n-1} I(\tau-3,\mathcal{M}^k\partial_t^j(B^*\varphi)_i) \\
&\leqslant \sum_{i=1}^{m}\sum_{j=0}^{n-1} \mathcal{J}(\tau-3-3(n-1),\mathcal{M}^k\partial_t^j(B^*\varphi)_i) \\
&\leqslant \sum_{i=1}^{m}\sum_{j=0}^{n-1} \mathcal{J}(\tau-3n,\mathcal{M}^k\partial_t^j(B^*\varphi)_i).
\end{align*}
For the choice of $\tau_0=\tau+4k+n-4$ one gets $\ds (s\theta)^{\tau}\leqslant C (s\theta)^{\tau_0-4(l+j)+3n}$ for every $l,\,j$ with $0\leqslant l\leqslant k$ and $0\leqslant j\leqslant n-1$. Thus, using Theorem \ref{theorem:3.2}, we deduce
\begin{align*}
\int_{0}^{T}(s\theta)^{\tau}e^{-2sM_0\theta}\| \mathcal{M}^k\mathcal{K}^*\varphi\|_{L^2(0,1)^n}^2
&\leqslant  \sum_{i=1}^{m}\sum_{j=0}^{n-1} \mathcal{J}(\tau-3n,\mathcal{M}^k\partial_t^j(B^*\varphi)_i) \\
&\leqslant \sum_{i=1}^{m}\sum_{j=0}^{n-1} \mathcal{J}(\tau_0-4(k+j),\mathcal{M}^k\partial_t^j(B^*\varphi)_i) \\
&\leqslant C \sum_{i=1}^{m} \int\!\!\!\!\!\int_{\omega_T}(s\theta)^{\tau_0+r}e^{2s\Phi}|(B^*\varphi)_i|^2\\
&\leqslant C\int\!\!\!\!\!\int_{\omega_T}(s\theta)^{\tau+\kappa+r}e^{2s\Phi}|B^*\varphi|^2,
\end{align*}
with $\kappa=4k+n-4$.\\
Now, when $\varphi_0\in L^2(0,1)^n$, there exists a Cauchy sequence $(\varphi_0^l)_{l\geqslant 1}\subset \mathbb{D}^n$ such that $\varphi_0^l\longrightarrow \varphi_0 \in L^2(0,1)^n$. Let $\varphi^l$ and $\varphi$ be, respectively, the solution of the adjoint problem \eqref{syst2adjoint} corresponding to $\varphi_0^l$ and $\varphi_0$, we have  $\varphi^l\longrightarrow \varphi$ in $L^2(0,1)^n$ and
$\mathcal{M}^k\mathcal{K}^*\varphi^l\longrightarrow \mathcal{M}^k\mathcal{K}^*\varphi$ in $\mathcal{D}'(Q)^n$ for every $k\geqslant 0$.  Since $\varphi^l$ satisfies \eqref{equ:thm4.2}, then we deduce that $(\mathcal{M}^k\mathcal{K}^*\varphi^l)_{l\geqslant 1}$ is a Cauchy sequence in the weighted space $L^2((s\theta)^{\frac{\tau}{2}}e^{-sM_0\theta},Q)$. Passing to the limit in the Carleman inequality \eqref{equ:thm4.2} satisfied by $\varphi^l$, we obtain the result in the general case. This ends the proof.
\end{proof}
At present, using the condition $Ker (\mathcal{K}^*)=\{0\}$ we state the following global Carleman estimate for the solution of Problem \eqref{syst2adjoint}

\begin{corollary}\label{coro4.3}
	 In addition to the assumptions in Theorem \ref{thm4.2}, we assume the condition $Ker (\mathcal{K}^*)=\{0\}$. Then, given $\tau\in\R$ and $k\geqslant (n-1)(2n-1)$, there exist two positive constants $C$ and $\sigma$ such that for every $\varphi_0\in L^2(0,1)^n$ the corresponding solution $\varphi$ to the adjoint problem \eqref{syst2adjoint} satifies
	 \begin{equation}
	 	\int\!\!\!\!\!\int_{Q} (s\theta)^{\tau}e^{-2sM_0\theta}{|\mathcal{M}^{k-(n-1)(2n-1)}\varphi|}^2\leqslant C \int\!\!\!\!\!\int_{(0,T)\times\omega}(s\theta)^{\tau+\kappa+r}e^{2s\Phi}|B^*\varphi|^2
	 \end{equation}
	 for every $s\geqslant \sigma$. $M_0,\,\kappa$ and $r=r(n)$ are as in Theorem \ref{thm4.2}.
\end{corollary}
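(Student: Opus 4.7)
The plan is to chain Theorem \ref{thm4.2} with the algebraic regularity estimate provided by Theorem \ref{theorem2.5}(3), whose hypothesis is precisely $Ker(\mathcal{K}^*)=\{0\}$. Theorem \ref{thm4.2} already controls the weighted norm of $\mathcal{M}^k\mathcal{K}^*\varphi$ by the localized observation of $B^*\varphi$, while Theorem \ref{theorem2.5}(3) allows one to ``invert'' $\mathcal{K}^*$ at the price of losing $(n-1)(2n-1)$ powers of $\mathcal{M}$. Composing the two should produce exactly the stated inequality.

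First I would reduce to the regular setting $\varphi_0\in\mathbb{D}^n$, in which case Proposition \ref{pro:3.3} guarantees $\varphi(t,\cdot)\in D(\mathcal{M}^p)^n$ for every $t\in[0,T]$ and every $p\geqslant 0$, so in particular $\mathcal{K}^*\varphi(t,\cdot)\in D(\mathcal{M}^k)^{nm}$ for each $t$. Applying Theorem \ref{theorem2.5}(3) pointwise in $t$ yields
\[
\|\mathcal{M}^{k-(n-1)(2n-1)}\varphi(t,\cdot)\|^2_{L^2(0,1)^n}\leqslant C\,\|\mathcal{M}^k\mathcal{K}^*\varphi(t,\cdot)\|^2_{L^2(0,1)^{nm}},
\]
with a constant $C$ independent of $t$. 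Multiplying by the time weight $(s\theta(t))^{\tau}e^{-2sM_0\theta(t)}$ and integrating over $(0,T)$ gives
\[
\int\!\!\!\!\!\int_{Q}(s\theta)^{\tau}e^{-2sM_0\theta}|\mathcal{M}^{k-(n-1)(2n-1)}\varphi|^2\leqslant C\int_{0}^{T}(s\theta)^{\tau}e^{-2sM_0\theta}\|\mathcal{M}^k\mathcal{K}^*\varphi\|^2_{L^2(0,1)^{nm}}\,dt.
\]
The right-hand side is exactly the left-hand side of \eqref{equ:thm4.2}, so invoking Theorem \ref{thm4.2} produces the desired bound by $C\iint_{(0,T)\times\omega}(s\theta)^{\tau+\kappa+r}e^{2s\Phi}|B^*\varphi|^2$, for $s$ larger than some $\sigma\geqslant s_0$.

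Finally, to accommodate a general datum $\varphi_0\in L^2(0,1)^n$, I would approximate it by a sequence $(\varphi_0^{\ell})\subset\mathbb{D}^n$ converging to $\varphi_0$ in $L^2(0,1)^n$, apply the inequality just proven to each $\varphi^{\ell}$, and pass to the limit exactly as at the end of the proof of Theorem \ref{thm4.2}: the right-hand side converges because $B^*\varphi^{\ell}\to B^*\varphi$ in $L^2(Q)^m$, and the left-hand side is lower semicontinuous under weak convergence, or alternatively one checks that $(\mathcal{M}^{k-(n-1)(2n-1)}\varphi^{\ell})$ is Cauchy in the relevant weighted space. The main obstacle is only bookkeeping: verifying that Theorem \ref{theorem2.5}(3) can legitimately be applied slice-by-slice in $t$ with a uniform constant and that the pointwise integrability in $t$ is not an issue—both are automatic from the regularity in Proposition \ref{pro:3.3}. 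The substance of the proof is the composition of two estimates already at our disposal.
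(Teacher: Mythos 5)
Your proposal is correct and follows essentially the same route as the paper: apply Theorem \ref{theorem2.5}(3) (for each time slice, with constant independent of $t$), multiply by the weight $(s\theta)^{\tau}e^{-2sM_0\theta}$, integrate in time, and conclude with the Carleman estimate \eqref{equ:thm4.2} of Theorem \ref{thm4.2}. The paper's proof is just a terser version of this composition (it does not even spell out the reduction to $\varphi_0\in\mathbb{D}^n$ and the density passage, which you handle more carefully).
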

\begin{proof}
	Since $Ker (\mathcal{K}^*)=\{0\}$ and $k\geqslant (2n-1)(n-1)$, then, we infer from Theorem \ref{theorem2.5}
	
	 $${\|\mathcal{M}^{k-(2n-1)(n-1)}\varphi \|}^2_{L^2(0,1)^n}\leqslant C  {\|\mathcal{M}^k \mathcal{K}^*\varphi \|}^2_{L^2(0,1)^{n m}}.$$
Now, using the inequality \eqref{equ:thm4.2} we have
\begin{align*}
	 	 	\int\!\!\!\!\!\int_{Q} (s\theta)^{\tau}e^{-2sM_0\theta}{|\mathcal{M}^{k_(n-1)(2n-1)}\varphi|}^2&\leqslant C \int\!\!\!\!\!\int_{Q} (s\theta)^{\tau}e^{-2sM_0\theta}{|\mathcal{M}^k \mathcal{K}^*\varphi|}^2\\ &\leqslant C\int\!\!\!\!\!\int_{(0,T)\times\omega}(s\theta)^{\tau+\kappa+r}e^{2s\Phi}|B^*\varphi|^2.
\end{align*}	
\end{proof}
At present, we are ready to give the proof of the main result.
\begin{proof}[Proof of Theorem \ref{maintheorem}]
The necessary part:
Suppose $Ker (\mathcal{K}^*)\neq\{0\}$, from Proposition \ref{prop22}, there exists $p_0\in\N^*$ such that $rank~\mathcal{K}_{p_0}=rank~[-\lambda_{p_0} \mathbf{D}+A|B]<n$. From the Kalman's rank condition applied to ordinary differential system
$$y'=(−\lambda_{p_0} \mathbf{D}+A)y + Bv$$ 	
is not controllable. Thus, there exists a nonzero solution $z_{p_0}(t)\in\R^n$ to the associated adjoint system
  	$$-z'=(−\lambda_{p_0} \mathbf{D}^*+A^*)z\,\,\, \text{ in } (0,T), $$
satisfying $B^*z_{p_0}(t)=0$ for all $t\in[0,T]$. Then,  let $\varphi_0=z_{p_0}(t)\varPhi_{p_0}$ where $\varPhi_{p_0}$ is the normalized eigenfunction associated with $\lambda_{p_0}$. The function $\varphi(t,x)=z_{p_0}(t)\varPhi_{p_0}$  is the solution of the adjoint problem \eqref{syst2adjoint},  corresponding to $\varphi_0$,  which  is nonzero and satisfies $B^*\varphi(t,x)=0$ in $Q$. So this solution does not satisfy the observability inequality \eqref{obser:inequality} and thus \eqref{syst1} is not controllable.	

For the sufficient part, let $\varphi\in L^2(0,T,H_a^1(0,1)^n)$ be the solution of Problem \eqref{syst2adjoint}  corresponding to $\varphi_0$. Using Corollary \ref{coro4.3} with $\tau=0$ and $k=(n-1)(2n-1)$, there exist two positive constants $C$ and $\sigma$ such that
\begin{equation}
	\int_{\frac T4}^{\frac{3T}4}\int_{0}^{1}e^{-2sM_0\theta}|\varphi|^2 dxdt\leqslant C\int\!\!\!\!\!\int_{(0,T)\times\omega}(s\theta)^l e^{2s\varPhi}|B^*\varphi|^2 dx dt,
\end{equation}
for all $s\geqslant \sigma$, where $l=4(n-1)(2n-1)+n+r$.
For all $t\in [\frac T4,\frac{3T}4]$,  we have $-2sM_0(\frac4{3T})^8\leqslant-2sM_0\theta\leqslant -2sM_0(\frac4{T})^8$, then we infer
$$e^{-2sM_0\theta}\geqslant e^{-2s M_0(\frac4{3T})^8},\,\, \forall t\in [\frac{T}{4},\frac{3T}{4}].$$ On the other hand, let $m_0=\min\limits_{x\in\omega}|\varPsi(x)|$. We have
\begin{align*}
	\int\!\!\!\!\!\int_{(0,T)\times\omega}(s\theta)^l e^{2s\varPhi}|B^*\varphi|^2 dx dt& \leqslant \int\!\!\!\!\!\int_{(0,T)\times\omega}(s\theta)^l e^{-2sm_0\theta}|B^*\varphi|^2 dx dt\\
	& \leqslant \int_{0}^T(s\theta)^l e^{-2sm_0\theta} \int_{\omega}|B^*\varphi|^2 dx dt.
\end{align*}
Since $\lim\limits_{t\rightarrow 0^+}(s\theta)^l e^{-2sm_0\theta}=\lim\limits_{t\rightarrow T^-}(s\theta)^l e^{-2sm_0\theta}=0$, we readily deduce
\begin{equation}\label{dem4.7}
\int_{\frac T4}^{\frac{3T}4}\int_{0}^{1}|\varphi|^2 dxdt\leqslant C\int\!\!\!\!\!\int_{(0,T)\times\omega}|B^*\varphi|^2 dx dt.
\end{equation}
As in \cite{A4}, there exists a positive constant $C$  depending on $\mathbf{D}$ and $A$ such that
\begin{equation}
	\frac{d}{dt}\left(e^{Ct}\|\varphi(t,\cdot)\|^2\right)\geqslant 0,\,\,\ \forall t\in (0,T).
\end{equation}
From this last inequality we also infer
\begin{align*}
	\|\varphi(0,\cdot)\|^2 \leqslant e^{C \frac{T}{4}}\|\varphi(\frac{T}{4},\cdot)\|^2
	\leqslant \frac{2}{T} e^{C \frac{3T}{4}}\int_{\frac{T}{4}}^{\frac{3T}{4}}\int_{0}^{1}|\varphi|^2.
\end{align*}
 Therefore, this last inequality together with \eqref{dem4.7}  imply
 the observability inequality for the solutions of the adjoint problem \eqref{syst2adjoint}
 \begin{equation}
 	 	\|\varphi(0,\cdot)\|^2 \leqslant  C \int\!\!\!\!\!\int_{(0,T)\times\omega}|B^*\varphi|^2 dx dt.
 \end{equation}	
 This completes the proof of the sufficient part and consequently that of Theorem \ref{maintheorem}.
 \end{proof}


\bibliographystyle{plain}

\end{document}